\def\pst@picture{%
  \@ifnextchar[%
  {\pst@picture@}%
  {\pst@picture@[shift=0]}}
\def\pst@picture@[#1]#2(#3,#4){%
  \@ifnextchar(%
  {\pst@@picture@[#1](#3,#4)}
  {\pst@@picture@[#1](0,0)(#3,#4)}}
\def\pst@@picture@[#1](#2,#3)(#4,#5){%
  \pssetylength\pst@dimb{#3}%
  \pssetylength\pst@dimd{#5}%
  \ifdim\pst@dimb>\pst@dimd%
  \pst@dimg=\pst@dimb%
  \pst@dimb=\pst@dimd%
  \pst@dimd=\pst@dimg%
  \fi%
  \pst@@@picture@#1,(#2,\pst@dimb)(#4,\pst@dimd)}
\def\pst@@@picture@{%
  \@ifnextchar,%
  {\pst@@@@picture@ shift=\pst@dimb}%
  {\pst@@@@picture@}}
\def\pst@@@@picture@#1,{%
  \set@ps#1==\@nil%
  \@ifnextchar(%
  {\pst@@@picture[]}%
  {\pst@@@@picture@}}
\def\set@ps#1#2=#3=#4\@nil{%
  \ifx#4=%
  \psset{#1#2=#3}%
  \else%
  \ifx#1b%
  \pst@dimg=0pt%
  \else%
  \ifx#1o%
  \pst@dimg=\pst@dimb%
  \else%
  \pst@dimg=-\pst@dimd%
  \advance\pst@dimg\pst@dimb%
  \ifx#1t%
  \else%
  \ifx#1c%
  \pst@dimg=0.5\pst@dimg%
  \else%
  \pst@dimg=#1#2\pst@dimg%
  \fi%
  \fi%
  \fi%
  \fi%
  \psset{shift=\pst@dimg}%
  \fi}
\newlength\fig@xput
\newlength\fig@yput
\newlength\fig@height
\newlength\fig@width
\newlength\fig@rightleft
\newlength\fig@updown
\newlength\fig@distance
\newlength\fig@temp
\newlength\fig@textright
\newlength\fig@textleft
\newlength\fig@framesep
\newcounter{fig@counta}
\newcounter{fig@countb}
\def\@fig@rightleft#1{\pssetylength\fig@rightleft{#1}}%
\def\@fig@updown#1{\pssetxlength\fig@updown{#1}}%
\def\@fig@space#1{\pssetxlength\fig@temp{#1}\advance\fig@distance\fig@temp}%
\def\@fig@lines#1{\setcounter{fig@counta}{#1}}%
\def\@fig@fillcolor#1{\def\fill@color{#1}}%
\def\@fig@fillstyle#1{\def\fill@style{#1}}%
\def\@fig@framesep#1{\pssetxlength\fig@framesep{#1}}%
\def\pspicinsq{%
  \@ifnextchar[{\@pspicinsq}{\@@pspicinsq}}%
\def\@pspicinsq[#1]#2{\pspicins[#1]{\quadro{#2}}}%
\def\@@pspicinsq#1{\pspicins{\quadro{#1}}}%
\def\pspicinsw{%
  \@ifnextchar[{\@pspicinsw,}{\@pspicinsw[]}}
\def\@pspicinsw#1[#2]{%
  \pspicins[fillstyle=none,fillcolor=white,framesep=0pt#1#2]}
\def\pspicins{%
  \psset{unit=1pt}%
  \def\fill@color{Bluartico}%
  \def\fill@style{solid}%
  \fig@framesep=2pt%
  \fig@rightleft=0pt%
  \fig@distance=3\baselineskip%
  \divide\fig@distance7%
  \fig@updown=0pt%
  \setcounter{fig@counta}{0}%
  \@ifnextchar[{\@pspicins}{\@@pspicins;}}%
\def\@pspicins[#1]{%
  \get@pspicins{\@@pspicins}#1,;}
\def\get@pspicins#1#2=#3,{%
  \csname @fig@#2\endcsname{#3}%
  \@ifnextchar;{#1}{\get@pspicins{#1}}}%
\def\@@pspicins;#1{%
  \settoheight\fig@height{\sfondo{#1}}
  \settodepth\fig@width{\sfondo{#1}}
  \advance\fig@height\fig@width
  \settowidth\fig@width{\sfondo{#1}}%
  \advance\fig@width-\fig@rightleft%
  \count@lines{\fig@height}%
  \fig@textleft=\textwidth%
  \advance\fig@textleft-\linewidth%
  \fig@textright=\linewidth%
  \advance\fig@textright-\fig@width%
  \advance\fig@textright-\fig@distance%
  \fig@xput=\linewidth%
  \advance\fig@xput\fig@rightleft%
  \fig@yput=\baselineskip%
  \fig@yput=-\thefig@counta\fig@yput%
  \advance\fig@yput\baselineskip%
  \advance\fig@yput\fig@updown%
  \fig@parshape{\the\fig@textleft}{\the\fig@textright}{\the\fig@textleft}{\the\linewidth}%
  \rput[rb](\pst@number\fig@xput,\pst@number\fig@yput){%
    \psset{fillcolor=\fill@color}\sfondo{#1}}\ignorespaces}
\def\count@lines#1{%
  \setlength\fig@temp#1%
  \count@lines@}%
\def\count@lines@{%
  \ifdim\fig@temp>0pt%
  \stepcounter{fig@counta}%
  \advance\fig@temp-\baselineskip%
  \def\next{\count@lines@}%
  \else%
  \def\next{}%
  \fi%
  \next}%
\def\fig@parshape#1#2#3#4{%
  \stepcounter{fig@counta}%
  \setcounter{fig@countb}{\thefig@counta}%
  \fig@parshape@{\thefig@countb}{#1 #2}{#3 #4}}%
\def\fig@parshape@#1#2#3{%
  \ifnum\thefig@counta=1%
  \def\next{\parshape #1 #3}%
  \else%
  \addtocounter{fig@counta}{-1}%
  \def\next{\fig@parshape@{#1}{#2}{#2 #3}}%
  \fi%
  \next}%
\def\nopic{\parshape1 \the\fig@textleft \the\linewidth}%
\def\psparshape{%
  \@ifnextchar[{\ps@parshape}{\ps@parshape[0pt]}}
\def\ps@parshape[#1]#2{%
  \fig@xput=\fig@textright%
  \fig@yput=\fig@textleft%
  \advance\fig@xput-#1%
  \advance\fig@yput#1%
  \setcounter{fig@counta}{#2}%
  \fig@parshape{\the\fig@yput}{\the\fig@xput}{\the\fig@yput}{\the\linewidth}}
\def\pspiccenterw{%
  \fig@framesep=0pt%
  \def\fill@color{white}%
  \def\fill@style{none}%
  \@ifnextchar[{\ps@piccenter}{\ps@@piccenter;}}%
\def\pspiccenter{%
  \fig@framesep=2pt%
  \def\fill@color{Bluartico}%
  \def\fill@style{solid}%
  \@ifnextchar[{\ps@piccenter}{\ps@@piccenter;}}%
\def\ps@piccenter[#1]{%
  \get@pspicins{\ps@@piccenter}#1,;}%
\def\ps@@piccenter;#1{%
  {\psset{fillcolor=\fill@color}%
    \makebox[\the\linewidth][c]{\sfondo{#1}}}}%
  \def\thelast#1{\makebox[\the\fig@temp][s]{#1}}
\gdef\pst@activemathcoor{%
  \def|{\string|}%
  \def;{\string;}%
  \def+{\string+}%
  \def*{\string*}%
  \def?{\string?}%
  \def>{\string>}%
  \def<{\string<}%
  \def!{\string!}%
  \def/{\string/}%
  \def`{\string`}}
\def\MathCoor{%
  \SpecialCoor
  \def\pst@@getcoor##1{%
    \begingroup
    \pst@activemathcoor
    \xdef\pst@tempg{##1}%
    \endgroup
    \expandafter\if@sum@coor\pst@tempg+;\@nil}}
\def\if@sum@coor#1+{%
  \@ifnextchar;{\if@mul@coor#1*}{\sum@coor#1+}}
\def\sum@coor#1+#2\@nil{%
  \begingroup
  \if@sum@coor#2\@nil
  \let\pst@sumx\pst@coor
  \if@mul@coor#1*;\@nil
  \xdef\pst@tempg{\pst@sumx exch \pst@coor 3 1 roll add 3 1 roll add }%
  \endgroup
  \let\pst@coor\pst@tempg}
\def\if@mul@coor#1*{%
  \@ifnextchar;{\if@rotate@coor#1?}{\@mul@coor{#1}}}
\def\@mul@coor#1#2*;\@nil{%
  \begingroup
  \isit@modulo@coor#1]\@nil
  \if@sum@coor#2+;\@nil
  \xdef\pst@tempg{\pst@coor \vect@number dup 4 3 roll mul 3 1 roll mul }%
  \endgroup
  \let\pst@coor\pst@tempg}
\def\if@rotate@coor#1?{%
  \@ifnextchar;{\if@vect@coor#1>}{\rotate@coor{#1}}}
\def\rotate@coor#1#2?;\@nil{%
  \begingroup
  \isit@angle@coor#1]\@nil
  \if@sum@coor#2+;\@nil
  \xdef\pst@tempg{%
    \pst@coor \vect@number dup sin exch cos 4 copy 4 3 roll mul 
    3 1 roll mul sub 5 1 roll 3 2 roll mul 3 1 roll mul add }%
  \endgroup
  \let\pst@coor\pst@tempg}
\def\isit@modulo@coor{%
  \@ifnextchar`{\modulo@coor}{\number@coor}}
\def\isit@angle@coor{%
  \@ifnextchar`{\angle@coor}{\number@coor}}
\def\angle@coor`{\@ifnextchar`{\angle@neg@coor}{\angle@@coor}}
\def\angle@neg@coor`#1]\@nil{%
  \if@sum@coor#1+;\@nil
  \edef\vect@number{\pst@coor exch 2 copy dup 
    mul exch dup mul add 0 eq {pop} {atan} ifelse neg }}
\def\angle@@coor#1]\@nil{%
  \if@sum@coor#1+;\@nil
  \edef\vect@number{\pst@coor exch 2 copy dup 
    mul exch dup mul add 0 eq {pop} {atan} ifelse }}
\def\modulo@coor`{\@ifnextchar`{\modulo@div@coor}{\modulo@@coor}}
\def\modulo@div@coor`#1]\@nil{%
  \if@sum@coor#1+;\@nil
  \edef\vect@number{1 \pst@coor \back@unit dup 
    mul exch dup mul add sqrt dup 0 eq {pop} {div} ifelse }}
\def\modulo@@coor#1]\@nil{%
  \if@sum@coor#1+;\@nil
  \edef\vect@number{\pst@coor \back@unit dup mul exch dup mul add sqrt }}
\def\number@coor#1]\@nil{%
  \edef\vect@number{#1 }}
\def\if@vect@coor#1>{%
  \@ifnextchar;{\if@mixed@coor#1|}{\vect@coor{#1}}}
\def\vect@coor#1#2>;\@nil{%
  \begingroup
  \if@sum@coor#2+;\@nil
  \let\pst@vectx\pst@coor
  \if@sum@coor#1+;\@nil
  \xdef\pst@tempg{\pst@vectx exch \pst@coor 3 1 roll sub 3 1 roll sub }%
  \endgroup
  \let\pst@coor\pst@tempg}
\def\if@mixed@coor#1|{%
  \@ifnextchar;{\if@versus@coor#1;}{\math@mixed@coor{#1}}}
\def\math@mixed@coor#1#2|;\@nil{%
  \begingroup
  \if@sum@coor#1+;\@nil
  \let\pst@tempa\pst@coor
  \if@sum@coor#2+;\@nil
  \xdef\pst@tempg{\pst@tempa pop \pst@coor exch pop }%
  \endgroup
  \let\pst@coor\pst@tempg}
\def\if@versus@coor{%
  \@ifnextchar/{\versus@coor}{\if@raw@coor}}
\def\versus@coor/#1;;\@nil{%
  \begingroup
  \if@sum@coor#1+;\@nil
  \xdef\pst@tempg{\pst@coor exch 2 copy dup mul exch dup mul 
    add dup 0 eq {pop 1} if sqrt dup 3 1 roll div 3 1 roll div \step@unit }%
  \endgroup
  \let\pst@coor\pst@tempg}
\def\if@raw@coor{%
  \@ifnextchar!{\math@raw@coor}{\if@polar@coor}}
\def\math@raw@coor!#1;;\@nil{%
  \edef\pst@coor{#1 \step@unit }}
\def\if@polar@coor#1;{%
  \@ifnextchar;{\if@cartesian@coor#1,}{\math@polar@coor{#1}}}
\def\math@polar@coor#1#2;;\@nil{%
  \edef\pst@coor{#2 cos #1 mul #2 sin #1 mul \step@unit }}
\def\if@cartesian@coor#1,{%
  \@ifnextchar;{\isit@vector@coor#1;}{\math@cartesian@coor{#1}}}
\def\math@cartesian@coor#1#2,;\@nil{%
  \pssetxlength\pst@dimg{#1}%
  \pssetylength\pst@dimh{#2}%
  \edef\pst@coor{\pst@number\pst@dimg \pst@number\pst@dimh}}%
\def\isit@vector@coor{%
  \@ifnextchar<{\what@node@coor}{\vector@coor}}
\def\what@node@coor<{%
  \@ifnextchar[{\Node@coor}{\node@coor}}
\def\vector@coor#1;;\@nil{%
  \edef\pst@coor{V@#1 \step@unit }}
\def\vnode{\@ifnextchar({\vnode@}{\vnode@(0,0)}}
\def\vnode@(#1)#2{%
  \pst@getvect{#1}\pst@vecta%
  \pnode@(!/V@#2 {\pst@vecta} bind def V@#2){#2}}
\def\back@unit{\pst@number\psyunit div exch \pst@number\psxunit div exch }
\def\step@unit{\pst@number\psyunit mul exch \pst@number\psxunit mul exch }
\def\pst@getvect#1#2{%
  \begingroup
  \pst@@getcoor{#1}%
  \xdef\pst@tempg{\pst@coor \back@unit }%
  \endgroup
  \let#2\pst@tempg}
\def\intersection(#1)(#2)(#3){\@ifnextchar({\@inters(#1)(#2)(#3)}{\@inters(0,0)(#1)(#2)(#3)}}
\def\@inters(#1)(#2)(#3)(#4)#5{%
  \pst@getvect{#1}\pst@vecta%
  \pst@getvect{#2}\pst@vectb%
  \pst@getvect{#3}\pst@vectc%
  \pst@getvect{#4}\pst@vectd%
  \vnode(!\pst@vecta \pst@vectb \pst@vectc \pst@vectd inters){#5}}
\def\projection(#1)(#2){\@ifnextchar({\@projec(#1)(#2)}{\@projec(0,0)(#1)(#2)}}
\def\@projec(#1)(#2)(#3)#4{%
  \pst@getvect{#1}\pst@vecta%
  \pst@getvect{#2}\pst@vectb%
  \pst@getvect{#3}\pst@vectc%
  \vnode(!\pst@vecta \pst@vectb \pst@vectc proiez){#4}}
\def\axesxy#1(#2)(#3){\@ifnextchar({\axesxy@#1(#2)(#3)}{\axesxy@#1(0,0)(#2)(#3)}}
\def\axesxy@#1(#2)(#3)(#4){\axes#1(#2)(#3)(#4){$x$}{$y$}}
\def\axesreim#1(#2)(#3){\@ifnextchar({\axesreim@#1(#2)(#3)}{\axesreim@#1(0,0)(#2)(#3)}}
\def\axesreim@#1(#2)(#3)(#4){\axes#1(#2)(#3)(#4){Re}{Im}}
\def\axes{\@ifnextchar[{\axes@}{\axes@[fillstyle=none]}}
\def\axes@#1(#2)(#3){\@ifnextchar({\axes@@#1(#2)(#3)}{\axes@@#1(0,0)(#2)(#3)}}
\def\axes@@[#1](#2)(#3)(#4)#5#6{{\psset{#1}%
  \psline{->}({#3}|{#2})({#4}|{#2})
  \psline{->}({#2}|{#3})({#2}|{#4})
  \uput{2pt}[270]({#4}|{#2}){#5}
  \uput{2pt}[0]({#2}|{#4}){#6}}}
\def\mediumpoint(#1){%
  \@ifnextchar({\medium@point(#1)}{\medium@point(0,0)(#1)}}
\def\medium@point(#1)(#2)#3{%
  \vnode(0.5*{{#1}+{#2}}){#3}}
\def\incenter(#1)(#2)(#3)#4{%
  \pst@getvect{#1}\pst@vecta%
  \pst@getvect{#2}\pst@vectb%
  \pst@getvect{#3}\pst@vectc%
  \vnode(!\pst@vecta \pst@vectb \pst@vectc incentro){#4}}
\def\centroid(#1)(#2)(#3)#4{%
  \pst@getvect{#1}\pst@vecta%
  \pst@getvect{#2}\pst@vectb%
  \pst@getvect{#3}\pst@vectc%
  \vnode(!\pst@vecta exch \pst@vectb \pst@vectc exch 
  5 3 roll add add 3 div 4 1 roll add add 3 div){#4}}
\def\orthocenter(#1)(#2)(#3)#4{%
  \pst@getvect{#1}\pst@vecta%
  \pst@getvect{#2}\pst@vectb%
  \pst@getvect{#3}\pst@vectc%
  \vnode(!\pst@vecta  \pst@vectb  \pst@vectc  ortoc){#4}}
\def\circumcenter(#1)(#2)(#3)#4{%
  \pst@getvect{#1}\pst@vecta%
  \pst@getvect{#2}\pst@vectb%
  \pst@getvect{#3}\pst@vectc%
  \vnode(!\pst@vecta \pst@vectb \pst@vectc circoc){#4}}
\def\circleCP{\@ifnextchar[{\circle@CP,}{\circle@CP[]}}
\def\circle@CP#1[#2](#3)(#4){%
  \psellipse[showpoints=false#1#2](#3)(`{#3}>{#4}*{1,1})}
\def\rightangle{\@ifnextchar[{\right@angle,}{\right@angle[]}}
\def\right@angle#1[#2](#3)(#4){\@ifnextchar({\right@@angle[#1#2](#3)(#4)}{\right@@angle[#1#2](#3)(0,0)(#4)}}
\def\right@@angle[#1](#2)(#3)(#4){%
  \rput(#3){\psset{unit=2.5mm,dotstyle=*,linewidth=.3pt,showpoints=false,dotsize=2pt#1}%
    \psline(/{{#3}>{#2}})(/{{#3}>{#2}}+/{{#3}>{#4}})(/{{#3}>{#4}})
    \psdots(0.5*{/{{#3}>{#2}}+/{{#3}>{#4}}})}}
\def\arcangle{\@ifnextchar[{%
    \arcangle@i,}{%
    \arcangle@i[]}}
\def\arcangle@i#1[#2]{\@ifnextchar({%
    \arcangle@ii[#1#2]8mm}{%
    \arcangle@ii[#1#2]}}
\def\arcangle@ii#1(#2)#3(#4)(#5){%
  \ifx#3`\arcangle@iii#1(#5)(#4)(#2)
  \else\arcangle@iii#1(#2)(#4)(#5)\fi}
\def\arcangle@iii[#1]#2(#3)(#4)(#5){%
  \psarc[linewidth=.3pt#1](#4){#2}{({#4}>{#3})}{({#4}>{#5})}}
\def\labelangle{\@ifnextchar*{%
    \def\rp@t{\rput*}\labelangle@i}{%
    \def\rp@t{\rput}\labelangle@i*}}
\def\labelangle@i*{\@ifnextchar[{%
    \labelangle@ii}{%
    \labelangle@iii{}}}
\def\labelangle@ii[#1]{\labelangle@iii{[#1]}}
\def\labelangle@iii#1#2(#3)#4(#5)(#6)#7{%
  \ifx#4`\labelangle@iv#2///(#6)(#5)(#3){#7}{#1}
  \else\labelangle@iv#2///(#3)(#5)(#6){#7}{#1}\fi}
\def\labelangle@iv{\@ifnextchar/{%
    \labelangle@v8mm}{%
    \labelangle@v}}
\def\labelangle@v#1/{\@ifnextchar/{%
    \labelangle@vi#1/0.7}{%
   \labelangle@vi#1/}}
\def\labelangle@vi#1/#2/{\@ifnextchar/{%
    \labelangle@vii#1/#2/0.5}{%
    \labelangle@vii#1/#2/}}
\def\labelangle@vii#1/#2/#3/#4(#5)(#6)(#7)#8#9{%
  \pst@getvect{#5}\pst@vecta%
  \pst@getvect{#6}\pst@vectb%
  \pst@getvect{#7}\pst@vectc%
  \pssetlength\pst@dimg{#1}%
  \def\x@dim{#2 \pst@number\pst@dimg \pst@number\psxunit div mul mul }%
  \def\y@dim{#2 \pst@number\pst@dimg \pst@number\psyunit div mul mul }%
  \rp@t({#6}+!\pst@vecta \pst@vectb \pst@vectc 2ang 2 copy lt {exch 360 add exch} if 
  dup 3 1 roll sub #3 mul add dup cos \x@dim exch sin \y@dim){#8}%
  \arcangle#9#1(#5)(#6)(#7)}
\def\@ifgrop#1#2{\@ifnextchar`{#1}{\@ifnextchar({#1}{#2}}}
\def\@ifgropoe#1#2{\@ifgrop{#1}{\@ifnextchar>{#1}{#2}}}
\def\point{\@ifnextchar*{\def\up@t{\uput*}\point@i}{\def\up@t{\uput}\point@i*}}
\def\point@i*{\@ifnextchar[{\point@ii,}{\point@ii[]}}
\def\point@ii#1[#2]{\@ifnextchar({\point@iii[#1#2]2pt}{\point@iii[#1#2]}}
\def\point@iii[#1]#2(#3)#4(#5)(#6)#7{%
  \ifx#4`\point@iii[#1]#2(#6)(#5)(#3){#7}
  \else{\psset{dotstyle=o,dotsize=2.3pt#1}%
  \psdots(#5)\vnode(#3){nnA}\vnode(#5){nnB}\vnode(#6){nnC}%
  \up@t{#2}[!V@nnA V@nnB V@nnC label](#5){#7}}\fi}
\def\vpoint#1({\vpoint@i#1`(}
\def\vpoint@i#1`#2(#3)#4(#5){%
  \ifx#2`\def\next{\vpoint@ii#1(#5)}%
  \else\def\next{\vpoint@ii#1}\fi%
  \next(#3)#4(#5)}
\def\vpoint@ii#1(#2)#3(#4)#5(#6){\point#1(#2)#3(#4)(#6){$#4$}%
  \@ifgrop{\vpoint@ii#1(#4)#5(#6)}{\ifx#5`\point#1(#4)(#6)(#4){$#6$}\fi}}
\def\vlines{\@ifnextchar[{\vlines@i{arrows=c-c,linearc=0.1pt,}}{\vlines@i[arrows=c-c,linearc=0.1pt]}}
\def\vlines@i#1[#2]#3(#4){\vlines@ii[#1#2]#3`(#4)}
\def\vlines@ii[#1]#2`#3(#4)#5(#6){%
  \ifx#3`\def\@next{\vlines@iv{[#1](#4)}{[#1]#2(#6)(#4)}}
  \else\def\@next{\vlines@iii[#1]#2<}\fi
  \@next(#4)#5(#6)}
\def\vlines@iii[#1]#2<#3(#4)#5(#6){%
  \ifx#3<\def\@next{\vlines@iv{[#1](#4)}{[#1]#2(#4)}}
  \else\def\@next{\vlines@iv{[#1]}{[#1]#2(#4)}}\fi
  \@next(#4)#5(#6)}
\def\vlines@iv#1#2(#3)#4(#5){\@ifgropoe{%
    \vlines@iv{#1(#5)}{#2#4(#5)}(#5)}{%
    \ifx#4`\psline#1(#5)\vpoint#2(#5)(#3)
    \else\ifx#4>\psline#1(#5)\vpoint#2(#5)
    \else\psline#1\vpoint#2(#5)\fi\fi}}
\def\vpolygon{\@ifnextchar[{\vpolygon@i{linearc=0.2pt,}}{\vpolygon@i[linearc=0.2pt]}}
\def\vpolygon@i#1[#2]#3(#4){\vpolygon@ii[#1#2]#3`(#4)}
\def\vpolygon@ii[#1]#2`#3(#4)#5(#6){\vpolygon@iii{[#1](#4)}{[#1]#2(#4)}{#3(#4)(#6)}#5(#6)}
  \def\vpolygon@iii#1#2#3#4(#5){\@ifgrop{%
      \vpolygon@iii{#1(#5)}{#2#4(#5)}{#3}}{%
      \pspolygon#1(#5)\vpoint#2#4(#5)#3}}
\def\lineAB{\@ifnextchar({\lineAB@iv}{\@ifnextchar/{\lineAB@v}{\lineAB@i}}}
\def\lineAB@i#1({\lineAB@ii#1/(}
\def\lineAB@ii#1/{\@ifnextchar({\lineAB@iii#1}{\lineAB@vi#1/}}
\def\lineAB@iii#1(#2)(#3)#4/{\lineAB@vi#1/#4/(#2)(#3)#4/}
\def\lineAB@iv(#1)(#2)#3/#4/{\lineAB@vi#4/#3/(#1)(#2)#3/#4/}
\def\lineAB@v/#1(#2)(#3)#4/#5/{\lineAB@vi#5/#1/(#2)(#3)#4/#5/}
\def\lineAB@vi#1/#2/(#3)(#4)#5/#6/{%
  \psline[linestyle=dashed]({#3}+{#1 #2 add}*{{#4}>{#3}})({#4}+{#5 #6 add}*{{#3}>{#4}})%
  \psline({#3}+#2*{{#4}>{#3}})({#4}+#5*{{#3}>{#4}})}
\def\name{\@ifnextchar*{\def\up@t{\uput*}\name@}{\def\up@t{\uput}\name@*}}
\def\name@*#1(#2){\@ifnextchar({\name@@#1//(#2)}{\name@@#1//(0,0)(#2)}}
\def\name@@{\@ifnextchar/{\name@@@1.5pt}{\name@@@}}
\def\name@@@#1/{\@ifnextchar/{\name@@@@#1/0.5}{\name@@@@#1/}}
\def\name@@@@#1/#2/#3(#4)(#5)#6{%
  \up@t{#1}[(90?{#5}>{#4})](#2*{{#4}+{#5}}){#6}}
\def\treD#1#2(#3)#4{%
  \tre@D(#3,){#1}{#2}{#4}}
\def\tre@D(#1,#2,{\@ifnextchar){\tre@@D#1(#2,}{\tre@@D#1,#2(}}
\def\tre@@D#1(#2,)#3#4#5{%
  \pst@getvect{#1}\pst@vecta%
  \vnode(!\pst@vecta exch pop #2+\pst@vecta pop*#4*#3?!1 0){#5}}
\def\uput{\@ifnextchar*{\mathuput@i}{\mathuput@i{}}}
\def\mathuput@i#1{\@ifnextchar[{\mathuput@ii{#1}3pt}{\mathuput@ii{#1}}}
\def\mathuput@ii#1#2[{\@ifnextchar({\mathuput@iii{#1}{#2}[}{\mathuput#1{#2}[}}
\def\mathuput@iii#1#2[(#3)]{%
  \vnode(#3){uputangle}
  \mathuput#1{#2}[(!V@uputangle)]}
\def\mathuput{\def\pst@par{}\pst@ifstar{\@ifnextchar[{\uput@ii}{\uput@i}}}
\newcounter{proposizione}
\newcounter{corollario}
\newcounter{figura}
\newenvironment{dimo}{\begin{proof}[{\rm \bf Dimostrazione}]}{\end{proof}}
\def\figura{\normalsize\stepcounter{figura}Figura \thefigura.}
\newtheorem*{main}{Teorema (TT)}
\newtheorem*{prop}{\stepcounter{proposizione}Proposizione \theproposizione{} (P\theproposizione)}
\newtheorem*{corol}{\stepcounter{corollario}Corollario \thecorollario}
\newtheorem*{pro}{\stepcounter{proposizione}Proposizione \theproposizione}
\def\vvnn#1(#2,#3,#4){\vnode(#2,#3){A_#1}\vnode(#2,#4){B_#1}}
\def\wnn#1(#2,#3,#4){\vnode(5*#2,#3){A_#1}\vnode(5*#2,#4){B_#1}}
\def\f#1#2{\frac{#1}{#2}}
\def\retta{\@ifnextchar[{\retta@}{\retta@@}}
\def\retta@[#1](#2){{\psset{#1}\retta@@(#2)}}
\def\retta@@(#1){%
  \intersection(5*{0,0|#1})(5*{{1,0}|{#1+{90?#1}}})(55,0)(55,1){@a@}
  \intersection(5*{0,0|#1})(5*{{1,0}|{#1+{90?#1}}})(-2,0)(-2,1){@b@}
  \psline(@a@)(@b@)}
\def\R{\@ifnextchar2{\R@}{\ensuremath{\mathbb{R}}\xspace}}
\def\R@2{\ensuremath{\mathbb{R}^2}\xspace}
\def\axx(#1,#2)(#3,#4){\@ifnextchar({\axx@(#1,#2)(#3,#4)}{\axx@(0,0)(#1,#2)(#3,#4)}}
\def\axx@(#1,#2)(#3,#4)(#5,#6){\axes(#1,#2)(#3,#4)(#5,#6){$x$}{}\uput{2pt}[180](#1,#6){$y$}}
\def\L{\ensuremath{\mathscr{L}(\R^2)}\xspace}
\begin{document}
\begin{center}
  {\Huge\bf Segmenti paralleli}\\[5mm]
  {\small autore: \bf prof.~Antonio Polo}\\
  {\small Scuola Media Superiore Italiana di Rovigno (Croazia),}\\
  {\small Università ``Juraj Dobrila'' di Pola (Croazia),\\
    email: {\sf toni.rovigno@gmail.com}}\\[5mm]
  {\bf Riassunto}\\[2mm]
  \begin{minipage}{0.8\linewidth}
    In questo articolo cercherò di dare alcune risposte ad un problema matematico che il mio amico Patrizio Frederic, ricercatore di
    statistica presso l'università di Modena, mi ha sottoposto. Dati alcuni segmenti paralleli, vi è almeno una retta che li attraversa
    tutti? Se vi fossero molte rette che risolvono il problema, ve n'è forse una ``migliore'' delle altre? Alla prima domanda risponderò
    esaurientemente, per la seconda mi limiterò a dare alcuni spunti per una futura ricerca.
  \end{minipage}\\[3mm]
  \begin{minipage}{0.75\linewidth}
    {\bf Parole chiave.} Segmento, retta, parallelo, baricentro continuo, baricentro discreto, funzione lineare, operatore, punti, piano
    cartesiano, determinante, coefficiente angolare.
  \end{minipage}\\
  \begin{pspicture}[unit=0.8mm](-10,4)(64,55)
    \vvnn1(5,5,35)
    \vvnn2(15,20,50)
    \vvnn3(20,15,40)
    \vvnn4(35,30,45)
    \vvnn5(45,15,50)
    \vvnn6(50,10,60)
    \vnode(5*!-1 62 15 div)x
    \vnode(5*!11 28 3 div)y
    \multido{\n=1+1}6{\psline[linewidth=1pt,dotstyle=o,dotsize=3pt]{o-o}(A_\n)(B_\n)}
    \psline[linecolor=red](x+1 12 div*{x>y})(y)
    \psline[linecolor=red,linestyle=dashed](x+1 12 div*{y>x})(x+14 12 div*{x>y})
  \end{pspicture}
\end{center}
\subsection*{Un particolare operatore}
Cominciamo col definire uno strumento che ci permetta di stabilire quali sono le posizioni reciproche di tre punti $A(x_A,y_A)$,
$B(x_B,y_B)$ e $C(x_C,y_C)$ sul piano cartesiano; in particolare ci interessa sapere se i tre punti sono allineati o, in altenativa, su
quale semipiano si trova uno dei tre rispetto alla retta passante per gli altri due. Tale strumento sarà un operatore definito mediante il
determinante di una particolare matrice reale di dimensioni $3\times3$. 

Dati tre punti $A(x_A,y_A)$, $B(x_B,y_B)$ e $C(x_C,y_C)$ sul piano cartesiano definiamo l'operatore $\Phi:(\R^2)^3\to\R$ nel
seguente modo: $$\Phi(A,B,C):=\det\begin{pmatrix}1&1&1\\x_A&x_B&x_C\\y_A&y_B&y_C\end{pmatrix}.$$
Utilizziamo le proprietà di multilinearità del determinante per dimostrare alcune proprietà dell'operatore $\Phi$.
\begin{prop}
  $\Phi$ è invariante per traslazioni.
\end{prop}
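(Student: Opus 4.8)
The plan is to show that applying one common translation to all three points leaves the determinant that defines $\Phi$ unchanged. I would begin by fixing a translation vector, say $(p,q)$, so that each point $P=(x_P,y_P)$ is sent to $P'=(x_P+p,\,y_P+q)$; the content of the statement is precisely the identity $\Phi(A',B',C')=\Phi(A,B,C)$, where $A',B',C'$ are the images of $A,B,C$ under this single shift. Making explicit that the same vector is added to all three points is the one modelling choice worth stating up front.

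First I would write out the matrix for $\Phi(A',B',C')$ and read it row by row. The top row is still $(1,1,1)$, the second row becomes $(x_A+p,\,x_B+p,\,x_C+p)$, and the third becomes $(y_A+q,\,y_B+q,\,y_C+q)$. The key observation is that the second row equals the original second row plus $p$ times the top row, and likewise the third row equals the original third row plus $q$ times the top row.

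Now I invoke the multilinearity of the determinant in its rows, as suggested. Expanding linearly in the second row splits $\Phi(A',B',C')$ into the original determinant plus $p$ times a determinant whose second row is a copy of $(1,1,1)$; that second determinant has two equal rows and therefore vanishes by the alternating property. Repeating the same expansion in the third row removes the dependence on $q$ in exactly the same fashion, so the only surviving term is $\Phi(A,B,C)$, which is the claim.

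I do not expect a genuine obstacle: the computation is short, and the only point requiring care is organizational, namely keeping track of which row is being expanded and checking that each error term really does contain two identical rows. Equivalently, one may phrase the whole argument as the observation that passing from the original configuration to the translated one is effected by the two elementary row operations ``add $p$ times the first row to the second'' and ``add $q$ times the first row to the third,'' neither of which changes the determinant. This repackaging makes the invariance immediate, and I would include it as a closing remark to connect the result to the standard behaviour of the determinant under row operations.
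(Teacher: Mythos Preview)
Your argument is correct and follows essentially the same route as the paper: write the translated matrix, split each of the lower two rows by multilinearity into the original row plus a scalar multiple of $(1,1,1)$, and discard the cross terms because they contain a repeated row. Your closing remark about elementary row operations is a pleasant reformulation but not a different proof.
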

\begin{dimo}
  Sia data una traslazione $\mathcal T:\R^2\to\R^2$ di vettore $\vec v=(a,b)$. Abbiamo che
  \begin{gather*}
    \Phi\left(\mathcal T(A),\mathcal T(B),\mathcal T(C)\right)=
    \det\begin{pmatrix}1&1&1\\x_A+a&x_B+a&x_C+a\\y_A+b&y_B+b&y_C+b\end{pmatrix}=\\
    =\det\begin{pmatrix}1&1&1\\x_A&x_B&x_C\\y_A+b&y_B+b&y_C+b\end{pmatrix}+
    a\cdot\overbrace{\det\begin{pmatrix}1&1&1\\1&1&1\\y_A+b&y_B+b&y_C+b\end{pmatrix}}^{=0}=\\
    =\det\begin{pmatrix}1&1&1\\x_A&x_B&x_C\\y_A&y_B&y_C\end{pmatrix}+
    b\cdot\overbrace{\det\begin{pmatrix}1&1&1\\x_A&x_B&x_C\\1&1&1\end{pmatrix}}^{=0}=\Phi(A,B,C).
  \end{gather*}\vskip-1.11cm
\end{dimo}\vskip4.6mm
\begin{prop}
  $\Phi$ è invariante per rotazioni.
\end{prop}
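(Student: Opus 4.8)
The plan is to imitate the proof of the preceding proposition, replacing the per-entry shifts that govern translation invariance with the orthogonal mixing of coordinates a rotation induces, and to lean once more on the multilinearity of the determinant in its rows together with its alternating property.

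First I would reduce to a rotation about the origin. A rotation $\mathcal{R}$ of angle $\theta$ about an arbitrary center $c$ factors as $\mathcal{T}_{c}\circ\mathcal{R}_{0}\circ\mathcal{T}_{-c}$, where $\mathcal{R}_{0}$ is the rotation of the same angle about the origin and $\mathcal{T}$ denotes a translation; since translation invariance has just been established, it suffices to prove $\Phi(\mathcal{R}_0(A),\mathcal{R}_0(B),\mathcal{R}_0(C))=\Phi(A,B,C)$. Under $\mathcal{R}_0$ a point $P=(x_P,y_P)$ goes to $(x_P\cos\theta-y_P\sin\theta,\;x_P\sin\theta+y_P\cos\theta)$, so the transformed matrix keeps the top row $(1,1,1)$, acquires a new second row equal to $\cos\theta$ times the old $x$-row minus $\sin\theta$ times the old $y$-row, and a new third row equal to $\sin\theta$ times the old $x$-row plus $\cos\theta$ times the old $y$-row.

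Next I would expand by row-multilinearity, exactly as the previous proof splits each shifted row into the original row plus a constant row. Expanding the new second and third rows yields four determinants; the two in which the $x$-row (respectively the $y$-row) is repeated vanish by the alternating property, leaving
\[
\Phi(\mathcal{R}_0(A),\mathcal{R}_0(B),\mathcal{R}_0(C))
=\cos^2\theta\,\Phi(A,B,C)-\sin^2\theta\,\det\begin{pmatrix}1&1&1\\y_A&y_B&y_C\\x_A&x_B&x_C\end{pmatrix}.
\]
Swapping the last two rows of the remaining determinant flips its sign, turning that term into $+\sin^2\theta\,\Phi(A,B,C)$, so the whole expression collapses to $(\cos^2\theta+\sin^2\theta)\,\Phi(A,B,C)=\Phi(A,B,C)$.

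Equivalently, one may observe that the transformed matrix factors as $\tilde{R}_\theta\,M$, where $M$ is the original matrix and $\tilde{R}_\theta$ is the $3\times3$ matrix carrying a $1$ in its top-left corner and the rotation block $\begin{pmatrix}\cos\theta&-\sin\theta\\\sin\theta&\cos\theta\end{pmatrix}$ in its lower-right; by the multiplicativity of the determinant (Binet) one gets $\det(\tilde{R}_\theta M)=\det\tilde{R}_\theta\cdot\det M$ with $\det\tilde{R}_\theta=\cos^2\theta+\sin^2\theta=1$. The two routes are the same computation viewed through the multiplicative versus the multilinear lens. I expect no genuine obstacle: the only conceptual point is the reduction to a rotation centered at the origin, and once multilinearity is invoked the rest is the mechanical bookkeeping of which expanded determinants vanish and which pick up a sign on a row swap.
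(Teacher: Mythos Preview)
Your proposal is correct, and your second route --- factoring the transformed matrix as $\tilde R_\theta M$ and invoking the multiplicativity of the determinant with $\det\tilde R_\theta=\cos^2\theta+\sin^2\theta=1$ --- is exactly the paper's proof. Your preliminary reduction of a rotation about an arbitrary center to one about the origin via P1 is a detail the paper leaves implicit, and your first route through row-multilinearity is just the same computation unpacked, so there is nothing to correct.
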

\begin{dimo}
  Sia data una rotazione $\EuScript R:\R^2\to\R^2$ di angolo $\alpha$. Abbiamo che
  \begin{gather*}
    \Phi\left(\EuScript R(A),\EuScript R(B),\EuScript R(C)\right)=
    \det\left[\begin{pmatrix}1&0&0\\0&\cos\alpha&-\sin\alpha\\0&\sin\alpha&\cos\alpha\end{pmatrix}\cdot
      \begin{pmatrix}1&1&1\\x_A&x_B&x_C\\y_A&y_B&y_C\end{pmatrix}\right]=\\
    =\overbrace{\det\begin{pmatrix}1&0&0\\0&\cos\alpha&-\sin\alpha\\0&\sin\alpha&\cos\alpha\end{pmatrix}}^{=1}\cdot
    \det\begin{pmatrix}1&1&1\\x_A&x_B&x_C\\y_A&y_B&y_C\end{pmatrix}=\Phi(A,B,C)
  \end{gather*}\vskip-11.0mm
\end{dimo}
\begin{prop}
  Tre punti $A$, $B$ e $C$ del piano sono allineati se e solo se $\Phi(A,B,C)=0$.
\end{prop}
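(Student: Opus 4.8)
The plan is to convert the $3\times 3$ determinant defining $\Phi$ into a $2\times 2$ determinant that encodes the parallelism of the displacement vectors $\vec{AB}$ and $\vec{AC}$, and then to read off collinearity from the vanishing of that $2\times 2$ determinant.

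First I would exploit the translation invariance just established in (P1) to move $A$ onto the origin $O$: applying the translation of vector $-A=(-x_A,-y_A)$ gives
$$\Phi(A,B,C)=\Phi\big(O,\,B-A,\,C-A\big)=\det\begin{pmatrix}1&1&1\\0&x_B-x_A&x_C-x_A\\0&y_B-y_A&y_C-y_A\end{pmatrix}.$$
(Equivalently, one reaches the same matrix directly by subtracting the first column from the other two, using multilinearity of the determinant, without invoking P1.) Expanding along the first column, only the top-left entry contributes and I obtain
$$\Phi(A,B,C)=\det\begin{pmatrix}x_B-x_A&x_C-x_A\\y_B-y_A&y_C-y_A\end{pmatrix},$$
that is, the $2\times 2$ determinant whose columns are the coordinate vectors of $\vec{AB}$ and $\vec{AC}$.

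Next I would invoke the linear-algebra fact that a $2\times 2$ determinant vanishes precisely when its columns are linearly dependent, i.e.\ when $\vec{AB}$ and $\vec{AC}$ are parallel (one a scalar multiple of the other, the zero vector counting as parallel to everything). The heart of the matter is then the geometric equivalence: $A$, $B$, $C$ are collinear if and only if $\vec{AB}\parallel\vec{AC}$. For the forward direction, if the three points share a line $\ell$, then since $\ell$ passes through $A$ both displacement vectors point along the direction of $\ell$ and are hence dependent; for the converse, dependence of $\vec{AB}$ and $\vec{AC}$ forces $C$ onto the line through $A$ in the common direction (or makes two of the points coincide), so all three lie on one line. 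Chaining the three equivalences ($\Phi=0 \iff$ the $2\times 2$ determinant vanishes $\iff \vec{AB}\parallel\vec{AC} \iff$ collinearity) yields the claim.

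The main obstacle — really the only delicate point in an otherwise routine computation — is handling the degenerate configurations cleanly: when two of the points coincide, or when one displacement vector is the zero vector. In each such case $\Phi$ is zero (a repeated column in the original matrix, or a zero column after the reduction) and the points are trivially collinear, so the biconditional still holds; I would phrase "linearly dependent" and "collinear" so that these edge cases are covered rather than silently assuming the three points are distinct. It is also worth remarking, for geometric intuition, that $\Phi(A,B,C)$ equals twice the signed area of the triangle $ABC$, so the statement merely says that the triangle degenerates exactly when its area is zero.
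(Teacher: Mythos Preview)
Your argument is correct, but it takes a different route from the paper's. The paper invokes the \emph{rotation} invariance (P2), not translation, to assume without loss of generality that $y_A=y_B$; it then computes directly $\Phi(A,B,C)=(x_A-x_B)(y_A-y_C)$ and reads off the equivalence from this factored form, after first disposing of the case where two points coincide. You instead use translation (P1), or equivalently a column operation, to reduce $\Phi$ to the $2\times 2$ determinant whose columns are $\vec{AB}$ and $\vec{AC}$, and then appeal to the linear-dependence criterion. Your route is the standard textbook argument and has the advantage of not needing P2 at all (column subtraction suffices), and it makes the signed-area interpretation transparent. The paper's route is more hands-on: it avoids citing the ``$2\times 2$ determinant vanishes $\Leftrightarrow$ columns dependent'' fact by reducing everything to an explicit product of two scalar differences, at the price of invoking rotation invariance to place $A$ and $B$ on a horizontal line.
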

\begin{dimo}
  Se due dei tre punti coincidessero la proposizione sarebbe ovvia, possiamo pertanto supporre che i tre punti siano distinti. Per la
  proposizione P2 non è restrittivo supporre anche che i punti $A$ e $B$ siano allineati con l'asse delle ascisse, cioè che
  $y_A=y_B$. Abbiamo che 
  $$\Phi(A,B,C)=\det\begin{pmatrix}1&1&1\\x_A&x_B&x_C\\y_A&y_A&y_C\end{pmatrix}=(x_A-x_B)(y_A-y_C);$$
  per l'ipotesi che i punti siano distinti si ha che $x_A\ne x_B$, quindi $\Phi(A,B,C)=0$ se e solo se $y_A=y_C$, cioè se e solo se i punti
  $A$, $B$ e $C$ sono allineati. 
\end{dimo}
\begin{prop}
  Dati tre punti $A$, $B$ e $C$ del piano, distinti e non allineati, sia $\mathscr C$ la circonferenza circoscritta al triangolo
  $ABC$. Allora $\Phi(A,B,C)>0$ se e solo su $\mathscr C$ i punti $A$, $B$ e $C$ sono ordinati in senso antiorario. Viceversa
  $\Phi(A,B,C)<0$ se e solo se su $\mathscr C$ i punti $A$, $B$ e $C$ sono ordinati in senso orario.
\end{prop}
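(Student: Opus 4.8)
The plan is to use the invariances P1 and P2 to move the configuration into a position where the cyclic order on $\mathscr C$ collapses to a plain inequality between two angles. The key observation is that both sides of the asserted equivalence are preserved by translations and rotations: these are direct isometries, so they send a counterclockwise-ordered triple to a counterclockwise-ordered one and carry the circumscribed circle of $A,B,C$ onto that of the images, while $\Phi$ is left unchanged by P1 and P2. I would therefore first translate (P1) so that the circumcenter of the triangle lands at the origin, and then rotate (P2) so that $A$ sits on the positive $x$-semiaxis. After this normalization the three points lie on a circle of radius $r>0$ centred at the origin, and I may write $A=(r\cos\alpha,r\sin\alpha)$ with $\alpha=0$, $B=(r\cos\beta,r\sin\beta)$ and $C=(r\cos\gamma,r\sin\gamma)$, where $\beta,\gamma\in(0,2\pi)$ are distinct and both different from $0$. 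In these coordinates the statement ``$A,B,C$ are in counterclockwise order on $\mathscr C$'' becomes simply $\beta<\gamma$, since increasing the angle means moving counterclockwise; the clockwise case is $\beta>\gamma$.

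Next I would evaluate $\Phi$ in this polar form. Pulling the common factor $r$ out of the second and third rows of the defining determinant and expanding gives $\Phi(A,B,C)=r^2\,[\sin(\beta-\alpha)+\sin(\gamma-\beta)+\sin(\alpha-\gamma)]$, which for $\alpha=0$ reduces to $\Phi(A,B,C)=r^2\,[\sin\beta+\sin(\gamma-\beta)-\sin\gamma]$. The three arguments $\beta$, $\gamma-\beta$ and $-\gamma$ add up to $0$, so I would invoke the factorization $\sin u+\sin v+\sin w=-4\sin\frac u2\sin\frac v2\sin\frac w2$, valid whenever $u+v+w=0$ (a routine sum-to-product computation). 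This turns the bracket into $-4\sin\frac\beta2\sin\frac{\gamma-\beta}2\sin\frac{-\gamma}2=4\sin\frac\beta2\sin\frac{\gamma-\beta}2\sin\frac\gamma2$, whence $\Phi(A,B,C)=4r^2\sin\frac\beta2\,\sin\frac{\gamma-\beta}2\,\sin\frac\gamma2$.

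Finally I would read off the sign. Since $\beta,\gamma\in(0,2\pi)$ we have $\frac\beta2,\frac\gamma2\in(0,\pi)$, so $\sin\frac\beta2>0$ and $\sin\frac\gamma2>0$, and these factors never vanish (the points are distinct and non-collinear, so $r>0$ and no half-angle is a multiple of $\pi$). Hence the sign of $\Phi(A,B,C)$ equals that of $\sin\frac{\gamma-\beta}2$. As $\frac{\gamma-\beta}2\in(-\pi,\pi)\setminus\{0\}$, this sine is positive exactly when $\gamma>\beta$ and negative exactly when $\gamma<\beta$. Combining with the reading of the normalization, $\Phi(A,B,C)>0$ iff $\beta<\gamma$ iff $A,B,C$ are counterclockwise on $\mathscr C$, and $\Phi(A,B,C)<0$ iff they are clockwise, as claimed. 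I expect the delicate point to be not the algebra but the first step: convincing oneself that rotation invariance (P2) legitimately converts the genuinely cyclic notion of ``counterclockwise order on the circle'' into the linear inequality $\beta<\gamma$, and then keeping the half-angle signs and the angle ranges under control so that no spurious sign slips into the factorization.
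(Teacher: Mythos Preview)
Your argument is correct and follows the same overall strategy as the paper: invoke P1 to place the circumcenter at the origin, invoke P2 to rotate into a convenient position, parametrize the three points by angles on the circumcircle, compute $\Phi$, and read off its sign. The only difference is the choice of normalization. You pin $A$ at angle $0$, which leaves the expression $r^{2}\bigl[\sin\beta+\sin(\gamma-\beta)-\sin\gamma\bigr]$ and then requires the product identity $\sin u+\sin v+\sin w=-4\sin\frac{u}{2}\sin\frac{v}{2}\sin\frac{w}{2}$ (for $u+v+w=0$) to factor it. The paper instead rotates so that $A$ and $B$ become mirror images across the $x$-axis, namely $A=(R\cos\alpha,-R\sin\alpha)$ and $B=(R\cos\alpha,R\sin\alpha)$ with $\alpha\in(0,\pi)$; since then $x_A=x_B$, the $3\times3$ determinant collapses at once to $\Phi=2R^{2}\sin\alpha\,(\cos\alpha-\cos\gamma)$, and the orientation question becomes simply whether $C$ lies to the left or to the right of the vertical chord $AB$, i.e.\ whether $\cos\gamma<\cos\alpha$ or $\cos\gamma>\cos\alpha$. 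Your route costs one extra trigonometric identity but yields the fully symmetric factorization $\Phi=4r^{2}\sin\frac{\beta-\alpha}{2}\sin\frac{\gamma-\beta}{2}\sin\frac{\gamma-\alpha}{2}$, which makes the alternating behaviour of $\Phi$ under permutations of $A,B,C$ visible; the paper's normalization is shorter and bypasses the identity entirely.
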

\begin{dimo}
  Per la proposizione P1 non è restrittivo supporre che il circocentro del triangolo $ABC$ coincida con l'origine del piano cartesiano e
  per la P2 non è restrittivo supporre che i punti $A$ e $B$ siano allineati con l'asse delle ordinate, in modo tale che $x_A=x_B$ e che
  $y_A<y_B$. Sia $R$ il raggio della circonferenza circoscritta al triangolo $ABC$, allora per le suddette ipotesi esistono due angoli 
  $\alpha\in\langle 0,180^{\circ}\rangle$ e $\gamma\in[0,360^{\circ}\rangle$, $\gamma\ne\pm\alpha$, tali che $A(R\cos\alpha,-R\sin\alpha)$,
  $B(R\cos\alpha,R\sin\alpha)$ e $C(R\cos\gamma,R\sin\gamma)$ siano le coordinate cartesiane dei punti $A$, $B$ e $C$.
  \begin{center}
    \begin{pspicture}[linewidth=0.5pt,framesep=0pt](0,-29)(121,25)
      \intersection(20;70)(20;-70)(-25,25)(25,25)A
      \multido{\n=32+57,\na=145+180,\nu=1+-2}{2}{\rput(\n,0){%
          \multido{\n=-24+4}{13}{\psline[linecolor=lightgray](\n,-25)(\n,25)\psline[linecolor=lightgray](-25,\n)(25,\n)}%
          \axx(-25,-25)(25,25)%
          \uput*{1pt}[117.5](20;117.5){$\mathscr C$}
          \rput*[l](-23.7,-22.3){$\cos\alpha\ifnum\nu=-1<\else>\fi\cos\gamma$}%
          \rput[l](-25,-27){\figura}%
          \psline[linestyle=dashed](20;-70)(20;70)%
          \psline[linestyle=dashed](20;\na|0,0)(20;\na)%
           \point*(20;\na)(20;\na|0,0)(20;\na){$\scriptstyle{\cos\gamma}$}
           \point*(20;70|{\nu*20;70})(20;70|0,0)(20;70|{\nu*20;70}){$\scriptstyle{\cos\alpha}$}
          \point*(0,0)(20;-70)(0,0){$A$}%
          \point*(0,0)(20;\na)(0,0){$C$}%
          \point*(0,0)(20;70)(0,0){$B$}}}%
      \rput(32,0){\psarc[arcsep=4pt]{->}(0,0){20}{145}{-70}%
        \psarc[arcsep=4pt]{->}(0,0){20}{70}{145}%
        \psarc[arcsep=4pt]{->}(0,0){20}{-70}{70}}%
      \rput(89,0){\psarc[arcsep=4pt]{<-}(0,0){20}{-35}{70}%
        \psarc[arcsep=4pt]{<-}(0,0){20}{-70}{-35}%
        \psarc[arcsep=4pt]{<-}(0,0){20}{70}{-70}}%
    \end{pspicture}
  \end{center}
  Nelle figure 1 e 2 è evidenziato il fatto che se i punti $A$, $B$ e $C$ sono ordinati in senso antiorario, allora $\cos\alpha>\cos\gamma$,
  mentre se i punti $A$, $B$ e $C$ sono ordinati in senso orario, allora $\cos\alpha<\cos\gamma$. Ma calcolando $\Phi(A,B,C)$ abbiamo che
  \begin{gather*}
    \Phi(A,B,C)=\det\begin{pmatrix}1&1&1\\R\cos\alpha&R\cos\alpha&R\cos\gamma\\
      -R\sin\alpha&R\sin\alpha&R\sin\gamma\end{pmatrix}=2R^2\sin\alpha(\cos\alpha-\cos\gamma)
  \end{gather*}
  e siccome $2R^2\sin\alpha>0$, si deduce la tesi:
  \begin{align*}
    &\Phi(A,B,C)>0\quad\Leftrightarrow\quad\cos\alpha>\cos\gamma\quad\Leftrightarrow\quad\text{l'orientamento è antiorario,}\\
    &\Phi(A,B,C)<0\quad\Leftrightarrow\quad\cos\alpha<\cos\gamma\quad\Leftrightarrow\quad\text{l'orientamento è orario.}
  \end{align*}\vskip-6.75mm
\end{dimo}
E per concludere questo paragrafo, sempre riguardo all'operatore $\Phi$, facciamo un'ultima osservazione che ci tornerà utile nel seguito.
\begin{prop}
  Sono dati quattro punti $A$, $B$, $C$ e $D$ nel piano, posizionati in modo tale che $x_B<x_C$ e $x_A=x_D$. Le seguenti
  condizioni sono equivalenti:
  \begin{enumerate}[\rm i)]
  \item $y_A\ge y_D$,
  \item $\Phi(A,B,C)\ge\Phi(D,B,C)$,
  \item $\Phi(B,A,C)\le\Phi(B,D,C)$,
  \item $\Phi(B,C,A)\ge\Phi(B,C,D)$.
  \end{enumerate}
Inoltre in tutti e quattro i casi l'uguaglianza vale se e solo se $A\equiv D$.
\end{prop}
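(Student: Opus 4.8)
The plan is to reduce all four statements to a single determinant computation, namely the evaluation of $\Phi(A,B,C)-\Phi(D,B,C)$, and then to pass between the various orderings of the arguments using only the antisymmetry of the determinant. First I would exploit multilinearity of $\Phi$ in its first column: since the second and third columns of the two matrices coincide (both carry $B$ and $C$), the difference collapses into a single determinant,
\[
\Phi(A,B,C)-\Phi(D,B,C)=\det\begin{pmatrix}0&1&1\\x_A-x_D&x_B&x_C\\y_A-y_D&y_B&y_C\end{pmatrix}.
\]
Here the standing hypothesis $x_A=x_D$ makes the $(2,1)$ entry vanish, so expanding along the now almost-empty first column leaves only the contribution of $y_A-y_D$, giving the clean factorization
\[
\Phi(A,B,C)-\Phi(D,B,C)=(y_A-y_D)(x_C-x_B).
\]

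The rest of the equivalence (i)$\Leftrightarrow$(ii) is then immediate. The other hypothesis, $x_B<x_C$, guarantees $x_C-x_B>0$, so the sign of the left-hand side is exactly the sign of $y_A-y_D$. Consequently $y_A\ge y_D$ holds precisely when $\Phi(A,B,C)\ge\Phi(D,B,C)$, and equality in (ii) forces $y_A=y_D$; combined with $x_A=x_D$ this is precisely $A\equiv D$, which disposes of the final clause for this pair of conditions.

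For (iii) and (iv) I would not compute anything new, but simply invoke the fact that transposing two arguments of $\Phi$ (i.e.\ swapping two columns) reverses the sign of the determinant. A single transposition of the first two arguments gives $\Phi(B,A,C)=-\Phi(A,B,C)$ and $\Phi(B,D,C)=-\Phi(D,B,C)$, so negating both sides of (ii) and reversing the inequality turns it into (iii); a cyclic permutation of the three arguments is an even permutation and hence leaves $\Phi$ unchanged, so $\Phi(B,C,A)=\Phi(A,B,C)$ and $\Phi(B,C,D)=\Phi(D,B,C)$, making (iv) literally identical to (ii). Since these are exact sign relations, the equality case $A\equiv D$ propagates through all three reformulations unchanged. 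There is no serious obstacle in this argument; the only points that demand care are keeping track of the orientation of the inequalities (which hinges entirely on the strict sign $x_C-x_B>0$ coming from $x_B<x_C$) and verifying that equality in (ii) together with $x_A=x_D$ really does collapse to $A\equiv D$ rather than merely to $y_A=y_D$.
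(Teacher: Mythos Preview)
Your proof is correct and follows essentially the same route as the paper: compute $\Phi(A,B,C)-\Phi(D,B,C)$ via multilinearity in the first column, use $x_A=x_D$ to collapse it to $(y_A-y_D)(x_C-x_B)$, read off (i)$\Leftrightarrow$(ii) from the strict sign of $x_C-x_B$, and then derive (iii) and (iv) from the alternating property of the determinant. Your explicit remark that the cyclic shift is an even permutation (hence $\Phi(B,C,A)=\Phi(A,B,C)$) is a nice touch that the paper leaves implicit.
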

\begin{dimo}
  ($\bm{{\rm i}\Leftrightarrow {\rm ii}}$) Calcoliamo:
  \begin{gather*}
    \Phi(A,B,C)-\Phi(D,B,C)=\det\begin{pmatrix}1&1&1\\x_A&x_B&x_C\\y_A&y_A&y_C\end{pmatrix}-
    \det\begin{pmatrix}1&1&1\\x_A&x_B&x_C\\y_D&y_A&y_C\end{pmatrix}=\\
    =\det\begin{pmatrix}0&1&1\\0&x_B&x_C\\y_A-y_D&y_A&y_C\end{pmatrix}=(y_A-y_D)(x_C-x_B);
  \end{gather*}
  per ipotesi $x_C-x_B>0$, quindi $\Phi(A,B,C)\ge\Phi(D,B,C)\Leftrightarrow y_A\ge y_D$, ed inoltre
  $\Phi(A,B,C)=\Phi(D,B,C)\Leftrightarrow y_A=y_D\Leftrightarrow A\equiv D$.\\[1mm]
  ($\bm{{\rm ii}\Leftrightarrow{\rm iii}\Leftrightarrow{\rm iv}}$) Il determinante, oltre che multilineare, è un operatore alternante,
ovvero cambia di segno per  ogni trasposizione di riga o di colonna, quindi questa equivalenza è ovvia.
\end{dimo}
\subsection*{Lo spazio delle funzioni lineari}
Un altro strumento che utilizzeremo è lo spazio delle funzioni lineari, che chiameremo \L: è una sorta di spazio duale di \R2,
isomorfo a \R2, nel quale non troviamo rappresentati i funzionali lineari (cioè le applicazioni lineari del tipo $f:\R2\to\R$ tale che
$f(x,y)=ax+by$ per qualche $(a,b)\in\R2$),  bensì troviamo rappresentate le funzioni lineari reali, cioè le funzioni $f:\R\to\R$ tali che
$f(x)=kx+l$ per qualche $(k,l)\in\R2$. 

Le funzioni lineari di \L sono in corrispondenza biunivoca con le rette sul piano cartesiano non parallele all'asse delle ordinate, queste
sono in corrispondenza biunivoca con le equazioni del tipo $y=kx+l$ e quindi, tramite la coppia $(k,l)$ sono in corrispondenza biunivoca con
le coppie di \R2; questa corrispondenza ha delle proprietà interessanti evidenziate nella prossima proposizione.
\begin{pro}
  Sia \L lo spazio delle funzioni lineari di \R2. In \L valgono le seguenti proprietà:\\[2pt]
  {\bf(L1)} l'insieme dei fasci di rette sul piano cartesiano è in corrispondenza biunivoca con l'insieme delle rette su \L e, in particolare,
  una retta su \L è parallela alle ordinate se e solo se il fascio a cui è associata è improprio;\\[2pt]
  {\bf(L2)} l'insieme delle rette passanti per i punti di un segmento parallelo all'asse delle ordinate è convesso in \L e, in particolare,
  esso è formato dai punti del piano compresi tra due rette parallele.\\[2pt] 
  {\bf(L3)} l'insieme delle rette passanti per i punti di due segmenti paralleli all'asse delle ordinate, ma con ascisse distinte, è
  limitato e convesso in \L e, in particolare, esso è un parallelogramma il cui baricentro è associato alla retta passante per i punti medi
  dei due segmenti. 
\end{pro}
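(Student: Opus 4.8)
The plan is to fix once and for all the correspondence $r\mapsto(k,l)$ that sends the non-vertical line $r:y=kx+l$ to the point $(k,l)\in\L$, and then to translate every geometric condition on $r$ into an algebraic condition on $(k,l)$. For (L1) I would start from a proper pencil, i.e. the set of all lines through a fixed point $P=(x_0,y_0)$: a non-vertical line $y=kx+l$ passes through $P$ exactly when $y_0=kx_0+l$, that is $l=-x_0k+y_0$, which is the equation of a non-vertical line of \L (slope $-x_0$, intercept $y_0$). Conversely a non-vertical line $l=mk+c$ of \L is the image of the proper pencil through $(-m,c)$, so proper pencils correspond bijectively to non-vertical lines of \L. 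The improper pencils — the families of mutually parallel lines of common slope $k_0$ — are cut out by the single condition $k=k_0$, i.e. by the vertical lines of \L. Assembling the two cases yields the announced bijection between all pencils and all lines of \L, with the improper pencils matched precisely to the vertical lines.

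For (L2) I would take the vertical segment $x=x_0$, $y_1\le y\le y_2$. A line $y=kx+l$ meets it iff its ordinate at $x_0$ lands in $[y_1,y_2]$, i.e. iff $y_1\le x_0k+l\le y_2$. In \L this is the intersection of the two half-planes $x_0k+l\ge y_1$ and $x_0k+l\le y_2$, bounded by the parallel lines $l=-x_0k+y_1$ and $l=-x_0k+y_2$. As an intersection of half-planes it is convex, and it is exactly the strip between two parallel lines, as claimed.

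For (L3) I would apply (L2) to each of the segments $x=x_1$, $a_1\le y\le b_1$ and $x=x_2$, $a_2\le y\le b_2$ with $x_1\ne x_2$: a line crosses both iff $(k,l)$ lies in both strips, so the required set is the intersection of the two strips. The first strip has boundary direction governed by the slope $-x_1$ and the second by $-x_2$; since $x_1\ne x_2$ these directions differ, the two strips cross transversally, and their intersection is therefore a bounded parallelogram — convex as an intersection of convex sets. For its center I would intersect the two mid-lines $x_1k+l=m_1$ and $x_2k+l=m_2$, where $m_i=\f{a_i+b_i}{2}$ is the ordinate of the midpoint of the $i$-th segment; solving this $2\times2$ system gives the unique $(k,l)$ with $k=\f{m_1-m_2}{x_1-x_2}$, which is exactly the line through the two midpoints $(x_1,m_1)$ and $(x_2,m_2)$.

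The strip descriptions in (L2) are routine; the delicate point is the bookkeeping in (L1), where one must make the pencil-to-line map genuinely bijective while correctly placing the degenerate vertical cases (vertical lines of the plane are not represented in \L, and dually the improper pencil of vertical direction is excluded). The one structural fact that really drives (L3) is that $x_1\ne x_2$ forces the two strips to have distinct directions, which is precisely what makes their intersection a bounded parallelogram rather than an unbounded band.
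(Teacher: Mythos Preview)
Your proof is correct and follows essentially the same route as the paper for (L1) and (L2). For the barycenter in (L3) you take a slightly cleaner path: the paper explicitly writes out the four extremal lines $AC$, $AD$, $BC$, $BD$, records their $(k,l)$-coordinates in \L, and then averages a diagonal pair to locate the center; you instead observe that the center of the parallelogram lies on the mid-line of each strip and simply intersect $x_1k+l=m_1$ with $x_2k+l=m_2$. This bypasses the explicit vertex computation and reaches the same conclusion with less bookkeeping, at the cost of tacitly using that the center of a parallelogram formed by two crossing strips sits on both median lines --- a fact the paper's direct calculation establishes by hand.
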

\begin{dimo}
  {\bf(L1)} Sia dato un generico punto $P(x_P,y_P)\in\R2$, allora il fascio proprio di rette passante per $P$ ha equazione esplicita
  $y=k(x-x_P)+y_P$; al variare di $k\in\R$ questo fascio è rappresentato in \L dal luogo geometrico dei punti le cui coordinate sono
  $(k,-x_Pk+y_P)$. Questo luogo geometrico è una retta la cui equazione è $y=-x_Px+y_P$, quindi non parallela alle ordinate. Se invece
  avessimo un fascio proprio di rette in \R2, la sua equazione esplicita sarebbe del tipo $y=cx+l$, con $l$ parametro e $c$ costante; al
  variare di $l\in\R$ questo fascio è rappresentato in \L dal luogo geometrico dei punti le cui coordinate sono $(c,l)$ e questo luogo
  geometrico è una retta, parallela all'asse delle ordinate, di equazione $x=c$.
  \begin{center}
    \begin{pspicture}[c,linewidth=0.6pt,framesep=0pt](-22,-13)(30,30)%
      \multido{\ny=-8+4,\nx=-20+4}{13} {%
	\ifnum\ny>29{}\else\psline[linecolor=lightgray](-22,\ny)(30,\ny)\fi%
	\psline[linecolor=lightgray](\nx,-10)(\nx,30)}%
      \rput[l](-22,-12){\figura}%
      \axx(-22,-10)(30,30)%
      \psline[linewidth=2pt,linecolor=blue](4*1,1)(4*1,4)%
      \point*(4*1,4)(4*1,1)(4*1,4){$B$}%
      \point*(4*1,1)(4*1,4)(4*1,1){$A$}%
      \uput*{2pt}[180](0,4){$\scriptstyle1$}%
      \uput*{2pt}[270](4,0){$\scriptstyle1$}%
      \intersection(0,20)(8,8)(0,-10)(1,-10)A%
      \intersection(0,20)(8,8)(0,30)(1,30)B%
      \psline[linestyle=dashed](A)(B)
      \psline(A+0.1*{A>B})(B+0.1*{B>A})
      \pcline[linestyle=none,offset=-5pt](B)(B+0.25*{B>A})\lput*{:U}{$\scriptscriptstyle r_1:\,y=-\frac32x+5$}
      \intersection(-16,0)(-8,4)(30,0)(30,1)A%
      \intersection(-16,0)(-8,4)(-22,0)(-22,1)B%
      \psline[linestyle=dashed](A)(B)
      \psline(A+0.1*{A>B})(B+0.1*{B>A})
      \pcline[linestyle=none,offset=5pt](B+0.75*{B>A})(A)\lput*{:U}{$\scriptscriptstyle r_4:\,y=\frac12x+2$}
      \intersection(28,-8)(-20,24)(30,0)(30,1)A%
      \intersection(28,-8)(-20,24)(-22,0)(-22,1)B%
      \psline[linestyle=dashed](A)(B)
      \psline(A+0.1*{A>B})(B+0.1*{B>A})
      \pcline[linestyle=none,offset=5pt](B)(B+0.25*{B>A})\lput*{:U}{$\scriptscriptstyle r_2:\,y=-\frac23x+\frac83$}
      \intersection(0,12)(4,12)(30,0)(30,1)A%
      \intersection(0,12)(4,12)(-22,0)(-22,1)B%
      \psline[linestyle=dashed](A)(B)
      \psline(A+0.1*{A>B})(B+0.1*{B>A})
      \pcline[linestyle=none,offset=5pt](B)(B+0.15*{B>A})\lput*{:U}{$\scriptscriptstyle r_3:\,y=3$}%
    \end{pspicture}\hfill$\Longleftrightarrow$\hfill
    \begin{pspicture}[c,linewidth=0.6pt,framesep=0pt](-26,-13)(26,30)%
      \intersection(0,4)(4,0)(0,-10)(1,-10)A%
      \intersection(0,4)(4,0)(0,30)(1,30)B%
      \intersection(0,16)(16,0)(0,-10)(1,-10)C%
      \intersection(0,16)(16,0)(0,30)(1,30)D%
      \pspolygon[linestyle=none,fillstyle=solid,fillcolor=Bluartico](A)(B)(D)(C)
      \multido{\ny=-8+4,\nx=-24+4}{13}{%
	\ifnum\ny>29{}\else\psline[linecolor=lightgray](-26,\ny)(26,\ny)\fi%
	\psline[linecolor=lightgray](\nx,-10)(\nx,30)}
      \axx(-26,-10)(26,30)%
      \uput*{2pt}[202.5](0,4){$\scriptstyle1$}%
      \uput*{2pt}[22.5](0,16){$\scriptstyle4$}%
      \uput*{2pt}[247.5](4,0){$\scriptstyle1$}%
      \psline[linestyle=dashed](A)(B)
      \psline(A+0.1*{A>B})(B+0.1*{B>A})
      \psline[linestyle=dashed](C)(D)
      \psline(C+0.1*{C>D})(D+0.1*{D>C})
      {\psset{fillcolor=Bluartico}
	\uput*[140](4*0,3){$\scriptstyle r_3$}
	\uput*[5](2,8){$\scriptstyle r_4$}
	\uput*[175](-6,20){$\scriptstyle r_1$}
	\uput*[265](4 3 div*-2,8){$\scriptstyle r_2$}}
      \psdots[dotstyle=o](4*0,3)(2,8)(-6,20)(4 3 div*-2,8)
    \end{pspicture}
  \end{center}
  {\bf(L2)} Sia $\overline{AB}$ un segmento parallelo all'asse delle ordinate di estremi $A(a,b)$ e $B(a,c)$ (figura 3); un generico punto
  $P_t$ di questo segmento ha coordinate del tipo $P_t\big(a,b+t(c-b)\big)$ per qualche $t\in[0,1]$. Per quanto visto in (L1), il fascio
  proprio di rette passante per $P_t$ è rappresentato in \L dalle rette di equazione $y=-ax+b+t(c-b)$; al variare di $t\in[0,1]$ tali rette 
  rappresentano l'insieme di tutte le rette parallele comprese fra le rette $y=-ax+b$ e $y=-ax+c$ e formano un insieme convesso in
  \L.\\[2pt] 
  {\bf(L3)} Per quanto visto in (L2) è evidente che l'insieme in questione è un parallelogramma. Per quanto riguarda la seconda parte della
  tesi, siano $A(a,b)$, $B(a,c)$, $C(d,e)$ e $D(d,f)$ i vertici dei due segmenti e siano $E\big(a,\frac{b+c}2\big)$ e
  $F\big(d,\frac{e+f}2\big)$ i punti medi di essi; l'equazione della retta $EF$ sarà 
  $$y=\frac{b+c-e-f}{2(a-d)}(x-d)+\frac{e+f}2=\frac{b+c-e-f}{2(a-d)}x+\frac{ae+af-bd-cd}{2(a-d)}\,.$$
  Le equazioni delle rette $AC$, $AD$, $BC$ e $BD$ sono:
  \begin{align*}
    AC:&\,y=\frac{b-e}{a-d}x+\frac{ae-bd}{a-d}\quad,&AD:&\,y=\frac{b-f}{a-d}x+\frac{af-bd}{a-d}\;,\\
    BC:&\,y=\frac{c-e}{a-d}x+\frac{ae-cd}{a-d}\quad\text e&BD:&\,y=\frac{c-f}{a-d}x+\frac{af-cd}{a-d}\;.
  \end{align*}
  Queste quattro rette, in \L, rappresentano i vertici del parallelogramma corrispondente a tutte le rette passanti per $AB$ e $CD$; il
  baricentro di tale parallelogramma è il punto medio di una sua diagonale, ovvero il punto
  $$P\left(\frac{\frac{b-e}{a-d}+\frac{c-f}{a-d}}{2},\frac{\frac{ae-bd}{a-d}+\frac{af-cd}{a-d}}{2}\right)\;\Rightarrow\;
  P\left(\frac{b-e+c-f}{2(a-d)},\frac{ae-bd+af-cd}{2(a-d)}\right)$$ le cui coordinate corrispondono effettivamente ai coefficienti della
  retta $EF$.
\end{dimo}
\subsection*{Il problema}
Il problema che vogliamo risolvere è il seguente: dato un certo numero finito $n\ge2$ di segmenti chiusi, paralleli, ma a due a due non
allineati, si tratta di trovare delle condizioni necessarie e sufficienti affinché vi sia almeno una retta che li attraversi tutti.\\[2mm]
{\bf Notazioni} (figura 4).
\begin{itemize}
  \item Poniamo i segmenti su un piano cartesiano (figura 4), in modo tale che risultino essere paralleli all'asse delle ordinate, e
    chiamiamoli $L_1,L_2,\dots,L_n$ ordinandoli crescentemente secondo le ascisse.
  \item Siano $A_i(x_i,a_i)$ e $B_i(x_i,b_i)$ gli estremi dell'$i$-esimo segmento, con $a_i\le b_i$ per ogni $i\in\{1,2,\dots,n\}$ e
    $x_1<x_2<\cdots<x_n$. 
  \item Fra tutte le rette passanti per i vertici dei segmenti ne fissiamo due particolari: la retta $A_sB_t$ il cui
    coefficiente angolare  $k_{st}$ è il minimo tra i coefficienti angolari di tutte le rette del tipo $A_iB_j$ con $1\le i<j\le n$ e la
    retta $B_uA_v$ il cui  coefficiente angolare $k'_{uv}$ è il massimo tra i coefficienti angolari di tutte le rette del tipo $B_iA_j$
    con $1\le i<j\le n$.
\end{itemize}
\begin{center}\small
  \vvnn1(10,5,35)
  \vvnn2(25,20,50)
  \vvnn3(35,15,40)
  \vvnn4(55,30,45)
  \vvnn5(65,15,50)
  \vvnn6(100,10,60)
  \begin{pspicture}[framesep=0pt,c](-5,-6)(110,63)
    \multido{\n=5+5}{12}{\psline[linecolor=lightgray](-5,\n)(110,\n)}
    \multido{\n=5+5}{21}{\psline[linecolor=lightgray](\n,-3)(\n,63)}
    \axx(-5,-3)(110,63)
    \multido{\n=1+1}6{\psline[linewidth=2pt,linecolor=blue](A_\n)(B_\n)}
    \multido{\n=1+1}{5}{\name*(A_\n)(B_\n){$L_\n$}\point*(A_\n)(B_\n)(A_\n){$B_\n(x_\n,b_\n)$}
      \point*(B_\n)(A_\n)(B_\n){$A_\n(x_\n,a_\n)$}}
    \name*(A_6)(B_6){$L_n$}
    \point*(A_6)(B_6)(A_6){$B_n(x_n,b_n)$}\point*(B_6)(A_6)(B_6){$A_n(x_n,a_n)$}
    \rput*(82.5,35){\Huge$\cdots$}
    \rput[l](-5,-5){\figura}
    \name(A_1)(B_1){$L_1$}
  \end{pspicture}
\end{center}
\begin{main}
  Con le notazioni precedenti e considerando il caso non banale $n\ge3$, abbiamo che i seguenti fatti sono equivalenti:
  \begin{enumerate}[\rm i)]
  \item esiste almeno una retta che interseca tutti i segmenti $L_1,L_2,\dots,L_n$;
  \item $\Phi(A_i,B_j,A_k)\le0\le\Phi(B_i,A_j,B_k)$ per ogni $i,j,k$ tali che $1\le i<j<k\le n$;
  \item la retta $A_sB_t$ interseca tutti i segmenti $L_1,L_2,\dots,L_n$;
  \item la retta $B_uA_v$ interseca tutti i segmenti $L_1,L_2,\dots,L_n$.
  \end{enumerate}
\end{main}
\begin{proof}[{\rm \bf Dimostrazione}]
  ($\bm{{\rm i}\Rightarrow{\rm ii}}$) Fissiamo tre segmenti $L_i$, $L_j$ e $L_k$, con $1\le i<j<k\le n$; per ipotesi esiste una retta che 
  li attraversa,  quindi possiamo supporre che esistano tre punti allineati
  $P_i(x_i,y_i)\in L_i$, $P_j(x_j,y_j)\in L_j$ e $P_k(x_k,y_k)\in L_k$
  tali che  $a_p\le y_p\le b_p$ per ogni $p\in\{i,j,k\}$. Per la P3 sappiamo che $\Phi(P_i,P_j,P_k)=0$ e per la P5 abbiamo che\vskip-8mm
  \begin{gather*}
    \Phi(A_i,B_j,A_k)\le\Phi(P_i,B_j,A_k)\le\Phi(P_i,P_j,A_k)\le\overbrace{\Phi(P_i,P_j,P_k)}^{=0}\le\\
    \le\Phi(B_i,P_j,P_k)\le\Phi(B_i,A_j,P_k)\le\Phi(B_i,A_j,P_k)\,,
  \end{gather*}
  quindi questa prima implicazione è dimostrata.\\[1mm]
  ($\bm{{\rm ii}\Rightarrow{\rm iii}}$) Per ogni $i\in\{1,2,\ldots,n\}$ sia $P_i(x_i,y_i)$ il punto di intersezione tra la retta $A_sB_t$ e
  la retta $x=x_i$. Ci sono quattro casi da analizzare.\\[.5mm]
  $1^{\text o}$ caso: $1\le i<s$. Per ipotesi $\Phi(B_i,A_s,B_t)\ge 0$ e per la P3 $\Phi(P_i,A_s,B_t)=0$, quindi per la P5 abbiamo che
  $b_i\ge y_i$.\\[.5mm]
  $2^{\text o}$ caso: $1\le i<t$. Sappiamo che $k_{st}\le k_{it}$ e che il punto $P_i$ appartiene alla retta $A_sB_t$, quindi abbiamo che
  $$k_{st}\le k_{it}\quad\Leftrightarrow\quad\frac{y_i-b_t}{x_i-x_t}\le\frac{a_i-b_t}{x_i-x_t}
  \quad\Leftrightarrow\quad y_i-b_t\ge a_i-b_t\quad\Leftrightarrow\quad y_i\ge a_i\,.$$
  $3^{\text o}$ caso: $s<i\le n$. Sappiamo che $k_{st}\le k_{si}$ e che il punto $P_i$ appartiene alla retta $A_sB_t$, quindi abbiamo che
  $$k_{st}\le k_{si}\quad\Leftrightarrow\quad\frac{a_s-y_i}{x_s-x_i}\le\frac{a_s-b_i}{x_s-x_i}
  \quad\Leftrightarrow\quad a_s-y_i\ge a_s-b_i\quad\Leftrightarrow\quad y_i\le b_i\,.$$
  $4^{\text o}$ caso: $t<i\le n$. Per ipotesi $\Phi(A_s,B_t,A_i)\le 0$ e per la P3 $\Phi(A_s,B_t,P_i)=0$, quindi per la P5 abbiamo che
  $a_i\le y_i$.\\[.5mm]
  Aggiungendo anche le due disuguaglianze banali $b_s\ge y_s=a_s$ e $a_t\le y_t=b_t$, nei quattro casi sono dimostrate tutte le
  disuguaglianze del tipo $a_i\le y_i\le b_i$ per ogni $i\in\{1,2,\ldots n\}$, quindi la retta $A_sB_t$ interseca tutti gli $n$
  segmenti.\\[1mm]
  ($\bm{{\rm ii}\Rightarrow{\rm iv}}$) Dimostrazione del tutto analoga alla precedente.\\[1mm]
  ($\bm{{\rm iii}\Rightarrow{\rm i}}$), ($\bm{{\rm iv}\Rightarrow{\rm i}}$). Queste implicazioni sono ovvie.
\end{proof}
\begin{corol}
  I seguenti fatti sono equivalenti:
  \begin{enumerate}[\rm i)]
  \item esiste un'unica retta che interseca tutti i segmenti $L_1,L_2,\dots,L_n$;
  \item le rette $A_sB_t$ e $B_uA_v$ coincidono e intersecano i segmenti $L_1,L_2,\dots,L_n$;
  \item $\Phi(A_i,B_j,A_k)\le0\le\Phi(B_i,A_j,B_k)$ per ogni $i,j,k$ tali che $1\le i<j<k\le n$ e, se non siamo in presenza del caso limite
  in cui due dei segmenti degenerano in un punto, allora almeno in un caso si ha l'uguaglianza
  $$\Phi(A_i,B_j,A_k)=0\quad\text{oppure}\quad\Phi(B_i,A_j,B_k)=0\,.$$
  \end{enumerate}
\end{corol}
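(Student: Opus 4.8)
The plan is to route everything through condition (ii) and to work in the dual picture supplied by (L1)--(L3): there the set $K$ of all lines meeting every segment is a bounded convex polygon, namely the intersection of the $n$ strips produced by (L2). Call a line \emph{admissible} if it meets all the segments. Then (i) says precisely that $K$ is a single point, and I will link this to the coincidence of the two extremal lines $A_sB_t$ and $B_uA_v$.

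First I would prove (i)$\Leftrightarrow$(ii). For (i)$\Rightarrow$(ii): since there is a unique admissible line, Theorem (TT) (implications i$\Rightarrow$iii and i$\Rightarrow$iv) forces both $A_sB_t$ and $B_uA_v$ to equal that line, so they coincide and meet every segment. For (ii)$\Rightarrow$(i): recall from the setup that the slope of any admissible line lies between $k'_{uv}$ and $k_{st}$, these extremes being attained by $B_uA_v$ and $A_sB_t$ respectively. If the two extremal lines coincide, every admissible line has the single slope $k^{*}=k_{st}=k'_{uv}$, so $K$ is a family of parallels and the admissible intercepts fill $[\max_i(a_i-k^{*}x_i),\,\min_i(b_i-k^{*}x_i)]$. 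The common line runs through the lower endpoint $A_s$ and the upper endpoint $B_u$, giving intercept $l^{*}=a_s-k^{*}x_s=b_u-k^{*}x_u$; hence the left end of the interval is $\ge l^{*}$ while the right end is $\le l^{*}$, so the interval collapses to $\{l^{*}\}$ and (i) holds.

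Next, (iii)$\Rightarrow$(i). The inequality part of (iii) is exactly condition (ii) of Theorem (TT), so an admissible line exists. If two segments degenerate to points at distinct abscissae, their two strips are single, non-parallel lines in \L\ whose intersection is one point, and since $K$ is nonempty it equals that point. Otherwise some equality holds, say $\Phi(A_i,B_j,A_k)=0$; by P3 the points $A_i,B_j,A_k$ are collinear, so with $\lambda=(x_k-x_j)/(x_k-x_i)\in(0,1)$ one has $b_j=\lambda a_i+(1-\lambda)a_k$. For any admissible line the heights $y_i,y_j,y_k$ satisfy $y_j\le b_j$ and $y_j=\lambda y_i+(1-\lambda)y_k\ge\lambda a_i+(1-\lambda)a_k=b_j$; equality throughout forces $y_i=a_i$, $y_j=b_j$, $y_k=a_k$. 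Thus every admissible line passes through the two distinct points $A_i$ and $A_k$ and is therefore unique. The case $\Phi(B_i,A_j,B_k)=0$ is symmetric, pinning the line to $B_i,B_k$.

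Finally, (i)$\Rightarrow$(iii). The inequalities again come from Theorem (TT), and by the first step the coincident line $r=A_sB_t=B_uA_v$ passes through the four vertices $A_s,B_t,B_u,A_v$, two lower and two upper. Because $s<t$ and $u<v$, the abscissa order of these points can never be ``both lower then both upper'' nor its reverse, so reading their lower/upper types along $r$ one extracts three points in alternating order (lower--upper--lower or upper--lower--upper), and P3 makes the corresponding $\Phi$ vanish. I expect this step to be the main obstacle, because one must verify that the extraction fails only when both contact pairs collapse, i.e. $s=u$ and $t=v$; keeping $r$ non-vertical then forces $A_s=B_s$ and $A_t=B_t$, i.e. segments $s$ and $t$ degenerate to points. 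The delicate point is checking that a \emph{single} degenerate segment does not defeat the argument: its coincident endpoint may be labelled as lower or upper, and either labelling still produces an alternating triple, so the escape clause is triggered precisely by two degenerate segments, matching the statement.
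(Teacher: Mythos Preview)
Your argument is correct and the two hard steps---(iii)$\Rightarrow$(i) via the sandwich forcing $P_i=A_i$, $P_j=B_j$, $P_k=A_k$, and (i)$\Rightarrow$(iii) via an alternation among the four collinear vertices $A_s,B_t,B_u,A_v$---are exactly what the paper does (the paper phrases the first with P5 rather than an explicit convex combination, and the second as a four-case split on which of $x_s\lessgtr x_u$, $x_t\lessgtr x_v$ holds, but the content is the same). Your ``pattern'' argument could be tightened by simply listing those four cases; that also makes the degenerate-segment analysis immediate.

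Where you genuinely diverge is the implication (ii)$\Rightarrow$(i). The paper never proves this directly: it runs the cycle i$\Rightarrow$ii$\Rightarrow$iii$\Rightarrow$i, so (ii)$\Rightarrow$(i) is obtained only through (iii). You instead give a self-contained argument in the dual picture, squeezing the intercept interval to a point once the slopes are pinned. This is a nice shortcut, but note that the premise ``the slope of any admissible line lies between $k'_{uv}$ and $k_{st}$'' is \emph{not} part of the setup: in the paper it is Corollario~2, stated and proved \emph{after} the present corollary. The dependence is not circular (Corollario~2 uses only the definitions of $k_{st},k'_{uv}$ and an elementary slope comparison), so you may either cite it as a forward reference or reproduce its two-line proof inline; as written, however, this step appeals to something not yet available.
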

\begin{proof}[{\rm \bf Dimostrazione}]
  ($\bm{{\rm i}\Rightarrow{\rm ii}}$) Ovvio, per l'equivalenza ${\rm i}\Leftrightarrow{\rm iii}\Leftrightarrow{\rm iv}$ di TT.\\[1mm]
  ($\bm{{\rm ii}\Rightarrow{\rm iii}}$) TT ci assicura che $\Phi(A_i,B_j,A_k)\le0\le\Phi(B_i,A_j,B_k)$ per ogni $i,j,k$ tali che
  $1\le i<j<k\le n$. I quattro punti $A_s$, $B_t$, $B_u$ e $A_v$ sono, per ipotesi, allineati; supponendo che non vi siano due segmenti che
  degenerano in un punto, allora non può contemporaneamente accadere che $A_s\equiv B_u$ e $B_t\equiv A_v$, quindi siamo certi che, oltre
  alle disuguaglianze già note $x_s<x_t$ e $x_u<x_v$, si verifica anche almeno una fra  queste: 
  $$x_s<x_u\,,\quad x_u<x_s\,,\quad x_t<x_v\,,\quad x_v<x_t\,.$$ 
  In ciascuno dei quattro casi abbiamo che
  \begin{align*}
   \qquad\qquad x_s&<x_u\!\!\!\!\!&\vee&&\!\!\!\!\!x_u&<x_v&\Rightarrow&&\Phi(A_s,B_u,A_v)&=0\,,\qquad\qquad\\
   \qquad\qquad x_u&<x_s\!\!\!\!\!&\vee&&\!\!\!\!\!x_s&<x_t&\Rightarrow&&\Phi(B_u,A_s,B_t)&=0\,,\qquad\qquad\\
   \qquad\qquad x_t&<x_v\!\!\!\!\!&\vee&&\!\!\!\!\!x_s&<x_t&\Rightarrow&&\Phi(A_s,B_t,A_v)&=0\,,\qquad\qquad\\
   \qquad\qquad x_v&<x_t\!\!\!\!\!&\vee&&\!\!\!\!\!x_u&<x_v&\Rightarrow&&\Phi(B_u,A_v,B_t)&=0\,,\qquad\qquad
  \end{align*}
  e questo completa la dimostrazione.\\[1mm]
  ($\bm{{\rm iii}\Rightarrow{\rm i}}$) TT ci assicura che almeno una retta esiste; l'unicità sarebbe ovvia se due dei segmenti
  fossero puntiformi, quindi supponiamo che $\Phi(A_i,B_j,A_k)=0$ per qualche $i,j,k$ tali che $1\le i<j<k\le n$ (la dimostrazione è
  analoga se supponiamo che $\Phi(B_i,A_j,B_k)=0$). Una retta che attraversa i  segmenti $L_i$, $L_j$ e $L_k$ passerà per i tre punti
  allineati $P_i\in L_i$, $P_j\in L_j$ e $P_k\in L_k$, quindi per la P5 avremo che
  $$0=\Phi(A_i,B_j,A_k)\le\Phi(P_i,B_j,A_k)\le\Phi(P_i,P_j,A_k)\le\Phi(P_i,P_j,P_k)=0\,.$$
  Poiché tutte le disuguaglianze risultano essere delle uguaglianze allora, sempre per la P5, abbiamo che $P_i\equiv A_i$, 
  $P_j\equiv B_j$ e $P_k\equiv A_k$ e la retta non può che essere unica.
\end{proof}
\begin{corol}
  Tra tutte le rette che attraversano i segmenti $L_1,L_2,\ldots,L_n$ (supponendo che ce ne siano) le rette $A_sB_t$ e $B_uA_v$ sono
  quelle con coefficiente angolare rispettivamente massimo e minimo.
\end{corol}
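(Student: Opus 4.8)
The plan is to prove the two assertions symmetrically: that $A_sB_t$ attains the largest slope among all crossing lines, and that $B_uA_v$ attains the smallest. The argument reduces to a single elementary inequality on slopes; neither $\Phi$ nor \L is needed here, and the main theorem TT is used only to guarantee that the two distinguished lines are themselves crossing lines.

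First I would invoke TT: under the standing hypothesis that a crossing line exists, the implications i$\Rightarrow$iii and i$\Rightarrow$iv give that $A_sB_t$ and $B_uA_v$ both cross every segment, so both lie in the family whose slopes we compare. By definition their slopes are $k_{st}=\frac{b_t-a_s}{x_t-x_s}$ and $k'_{uv}=\frac{a_v-b_u}{x_v-x_u}$, which are respectively the slope of the chord $A_iB_j$ for $(i,j)=(s,t)$ and of the chord $B_iA_j$ for $(i,j)=(u,v)$.

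The heart of the proof is the slope bound. Let $r\colon y=kx+l$ be any line crossing all the segments, write $r(x)=kx+l$, and fix a pair $i<j$. Since $r$ meets $L_i$ and $L_j$ we have $a_i\le r(x_i)\le b_i$ and $a_j\le r(x_j)\le b_j$. Using $r(x_i)\ge a_i$, $r(x_j)\le b_j$ and $x_j-x_i>0$,
$$k=\frac{r(x_j)-r(x_i)}{x_j-x_i}\le\frac{b_j-a_i}{x_j-x_i},$$
and the right-hand side is exactly the slope of $A_iB_j$. As this holds for every pair $i<j$, taking the minimum gives $k\le k_{st}$. Symmetrically, from $r(x_i)\le b_i$ and $r(x_j)\ge a_j$ one obtains $k\ge\frac{a_j-b_i}{x_j-x_i}$, the slope of $B_iA_j$, and taking the maximum gives $k\ge k'_{uv}$.

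To finish I would simply combine the two bounds: every crossing line satisfies $k'_{uv}\le k\le k_{st}$. Since by the first step $A_sB_t$ and $B_uA_v$ are crossing lines realizing $k_{st}$ and $k'_{uv}$, these bounds are attained, and the claim follows. I do not expect a genuine obstacle in the calculation; the one point that must not be overlooked is that the extrema are actually achieved, which is precisely where TT is essential --- the inequalities alone would only bound the slopes, not exhibit the extremal lines. The remaining bookkeeping is merely keeping straight which endpoint supplies the upper and which the lower bound at each $x_i$, and this is fixed once and for all by the signs in the two displayed inequalities.
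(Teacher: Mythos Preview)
Your proof is correct and follows essentially the same elementary slope argument as the paper: both bound the slope of an arbitrary crossing line by the slope of the chord $A_iB_j$, the paper arguing by contrapositive on the single pair $(s,t)$ while you prove the direct inequality for every pair $i<j$ and then specialize to the minimum. You are also more careful than the paper in making explicit the role of TT in guaranteeing that $k_{st}$ and $k'_{uv}$ are actually attained by crossing lines; the paper's proof leaves this implicit.
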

\begin{proof}[{\rm \bf Dimostrazione}]
  Consideriamo una retta $r\!:\;y=kx+l$ di coefficiente angolare $k>k_{st}$, passante per i punti $P_s(x_s,y_s)\in L_s$ e $P_t(x_t,y_t)$ e
  dimostriamo che $P_t\notin L_t$. Calcoliamo:
  \begin{gather*}
    k_{st}<k\quad\Rightarrow\quad \frac{a_s-b_t}{x_s-x_t}<\frac{y_s-y_t}{x_s-x_t}\quad\Rightarrow\\
    \Rightarrow\quad a_s-b_t>y_s-y_t\quad\Rightarrow\quad y_t-b_t>y_s-a_s\ge0.
  \end{gather*}
  Siccome $y_t>b_t$ allora la retta $r$ non attraversa il segmento $L_t$ e $k_{st}$ è quindi il coefficiente angolare massimo fra quelli
  delle rette che attraversano tutti i segmenti. Analogamente $k'_{uv}$ sarà il minimo dei coefficienti angolari.
\end{proof}
\subsection*{La retta speciale}
A questo punto siamo in grado di suggerire un paio di algoritmi che ci permettono di trovare una retta che, fra le tante, possa
attraversare nel ``migliore'' dei modi tutti i segmenti.\\[2mm]
{\bf Algoritmo 1.} Una retta speciale... ma non troppo.
\begin{enumerate}
 \item Dati gli $n$ segmenti, troviamo la retta $r=A_sB_t$.
 \item Verifichiamo che sia $\Phi(B_i,A_s,B_t)\ge0$ per ogni $i\in\{1,2,\ldots,s-1\}$.
 \item Verifichiamo che sia $\Phi(A_s,B_t,A_j)\le0$ per ogni $j\in\{t+1,t+2,\ldots,n\}$.
 \item Se le verifiche ai punti $2$ e $3$ hanno avuto esito negativo non vi è motivo di proseguire, altrimenti cerchiamo anche la retta
  $p=B_uA_v$. Per TT le rette $r$ e $p$ passano per tutti i segmenti $L_1,L_2,\ldots,L_n$.
 \item Prendiamo la retta $s_1$ che, in \L, corrisponde al punto medio fra $r$ e $p$. Per L2 l'insieme delle rette passanti per tutti i
  segmenti $L_1,L_2,\ldots,L_n$ è convesso, quindi anche $s$ passa per tali segmenti. 
\end{enumerate}
\begin{center}\psset{unit=1mm}
  \small%
  \vvnn1(5,5,35)%
  \vvnn2(15,20,50)%
  \vvnn3(20,15,40)%
  \vvnn4(35,30,45)%
  \vvnn5(45,15,50)%
  \vvnn6(50,10,60)%
  \begin{pspicture}[c,framesep=0pt](-4,-6)(56,66)
    \pspolygon[linestyle=none,fillstyle=solid,fillcolor=Bluartico](A_1)(A_2)(A_3)(A_4)(A_5)(A_6)(B_6)(B_5)(B_4)(B_3)(B_2)(B_1)
    \multido{\n=5+5}{12}{\psline[linecolor=lightgray](-2,\n)(55,\n)}
    \multido{\n=5+5}{10}{\psline[linecolor=lightgray](\n,-2)(\n,65)}
    \axx(-2,-2)(55,65)
    \multido{\n=1+1}6{\psline[linewidth=1pt,linecolor=blue](A_\n)(B_\n)}
    \intersection(A_2)(B_5)(55,0)(55,1)X
    \intersection(A_2)(B_5)(-2,0)(-2,1)W
    \psline[linestyle=dashed,linewidth=0.6pt](X)(W)
    \pcline[linestyle=none,offset=3pt](X+0.08*{X>W})(X)\lput*{:U}{$\bm r$}
    \intersection(A_4)(B_1)(55,0)(55,1)Y
    \intersection(A_4)(B_1)(-2,0)(-2,1)Z
    \psline[linestyle=dashed,linewidth=0.6pt](Y)(Z)
    \pcline[linestyle=none,offset=4pt](Y)(Y+0.08*{Y>Z})\lput*{:D}{$\bm p$}
    \psline[linecolor=red](0.5*{X+Y})(0.5*{W+Z})
    \pcline[linestyle=none,offset=3pt](0.5*{X+Y}+0.08*{{0.5*{X+Y}}>{0.5*{W+Z}}})(0.5*{X+Y})\lput*{:U}{\color{red}$\bm{s_1}$}
    \vpoint(B_1)(A_1)(A_2)(A_3)(A_4)(A_5)
    \vpoint(A_5)(A_6)(B_6)(B_5)
    \vpoint1pt(B_6)(B_5)(B_4)
    \vpoint(B_5)(B_4)(B_3)(B_2)(B_1)
    \vpoint0pt(A_4)(A_5)(A_6)
    \vpoint2pt(B_2)(B_1)(Z)
    \multido{\n=1+1}{10}{\uput*{2pt}[d](5*\n,0){\n}}
    \multido{\n=1+1}{12}{\uput*{2pt}[l](5*0,\n){\n}}
    \uput[r](-4,-5){\figura}
  \end{pspicture}\hfill%
  \vvnn3(20,30,40)%
  \vvnn4(35,20,45)%
  \begin{pspicture}[c,framesep=0pt](-3,-6)(56,66)
    \pspolygon[linestyle=none,fillstyle=solid,fillcolor=Bluartico](A_1)(A_2)(A_3)(A_4)(A_5)(A_6)(B_6)(B_5)(B_4)(B_3)(B_2)(B_1)
    \multido{\n=5+5}{12}{\psline[linecolor=lightgray](-2,\n)(55,\n)}
    \multido{\n=5+5}{10}{\psline[linecolor=lightgray](\n,-2)(\n,65)}
    \axx(-2,-2)(55,65)
    \multido{\n=1+1}6{\psline[linewidth=1pt,linecolor=blue](A_\n)(B_\n)}
    \intersection(A_3)(B_5)(55,0)(55,1)X
    \intersection(A_3)(B_5)(-2,0)(-2,1)W
    \psline[linestyle=dashed,linewidth=0.6pt](X)(W)
    \pcline[linestyle=none,offset=3pt](X+0.08*{X>W})(X)\lput*{:U}{$\bm r$}
    \intersection(A_3)(B_1)(55,0)(55,1)Y
    \intersection(A_3)(B_1)(-2,0)(-2,1)Z
    \psline[linestyle=dashed,linewidth=0.6pt](Y)(Z)
    \pcline[linestyle=none,offset=4pt](Y)(Y+0.08*{Y>Z})\lput*{:D}{$\bm p$}
    \psline[linecolor=red](0.5*{X+Y})(0.5*{W+Z})
    \pcline[linestyle=none,offset=3pt](0.5*{X+Y}+0.08*{{0.5*{X+Y}}>{0.5*{W+Z}}})(0.5*{X+Y})\lput*{:U}{\color{red}$\bm{s_1}$}
    \vpoint(B_1)(A_1)(A_2)(A_3)(A_4)
    \vpoint(A_5)(A_6)(B_6)(B_5)
    \vpoint1pt(B_6)(B_5)(B_4)
    \vpoint(B_5)(B_4)(B_3)(B_2)(B_1)
    \vpoint0pt(A_3)(A_4)(A_5)(A_6)
    \vpoint2pt(B_2)(B_1)(Z)
    \multido{\n=1+1}{10}{\uput*{2pt}[d](5*\n,0){\n}}
    \multido{\n=1+1}{12}{\uput*{2pt}[l](5*0,\n){\n}}
    \uput[r](-4,-5){\figura}
  \end{pspicture}
\end{center}
In figura 5 abbiamo che
$$r\!:\,y=x+1\quad\wedge\quad p\!:\,y=-\frac16x+\frac{43}6\quad\Rightarrow\quad s_1\!:\,y=\frac5{12}x+\frac{49}{12}\,;$$
in figura 6 abbiamo che
$$r\!:\,y=\frac 45x+\frac{14}5\quad\wedge\quad p\!:\,y=-\frac13+\frac{22}3\quad\Rightarrow\quad
s_1\!:\,y=\frac{7}{30}x+\frac{76}{15}\,.$$
Ma questo metodo non sempre ci dà una ``buona'' retta; come è evidenziato nella figura 6, a volte la retta $s_1$ che si ottiene non
sembrerebbe essere una delle migliori, in quanto è una retta che passa per uno dei vertici.\\[2mm]
{\bf Algoritmo 2.} Una retta un po' più speciale.
\begin{enumerate}
 \item Eseguiamo i punti 1, 2, 3 e 4 dell'algoritmo 1.
 \item Tra tutte le rette $A_iA_j$, distinte da $r$ e da $p$, con $1\le i<j\le n$, prendiamo solo quelle che attraversano tutti i segmenti
  $L_1,L_2,\ldots,L_n$. 
 \item Tra tutte le rette $B_iB_j$, distinte da $r$ e da $p$, con $1\le i<j\le n$, prendiamo solo quelle che attraversano tutti i segmenti
  $L_1,L_2,\ldots,L_n$. 
 \item Per L2 l'insieme delle rette trovate ai punti 5 e 6, assieme alle rette $r$ e $p$, in \L sono i vertici di un poligono
  convesso. Prendiamo come retta $s_2$ quella corrispondente al baricentro discreto di tale poligono.
\end{enumerate}
Rivediamo l'esempio della figura 5.
\begin{center}\psset{unit=1mm}
  \small%
  \vvnn1(5,5,35)%
  \vvnn2(15,20,50)%
  \vvnn3(20,15,40)%
  \vvnn4(35,30,45)%
  \vvnn5(45,15,50)%
  \vvnn6(50,10,60)%
  \begin{pspicture}[c,framesep=0pt](-4,-6)(56,66)
    \pspolygon[linestyle=none,fillstyle=solid,fillcolor=Bluartico](A_1)(A_2)(A_3)(A_4)(A_5)(A_6)(B_6)(B_5)(B_4)(B_3)(B_2)(B_1)
    \multido{\n=5+5}{12}{\psline[linecolor=lightgray](-2,\n)(55,\n)}
    \multido{\n=5+5}{10}{\psline[linecolor=lightgray](\n,-2)(\n,65)}
    \axx(-2,-2)(55,65)
    \multido{\n=1+1}6{\psline[linewidth=1pt,linecolor=blue](A_\n)(B_\n)}
    \intersection(A_2)(B_5)(55,0)(55,1)X
    \intersection(A_2)(B_5)(-2,0)(-2,1)W
    \psline[linestyle=dashed,linewidth=0.6pt](X)(W)
    \pcline[linestyle=none,offset=3pt](X+0.08*{X>W})(X)\lput*{:U}{$\bm r$}
    \intersection(A_4)(B_1)(55,0)(55,1)Y
    \intersection(A_4)(B_1)(-2,0)(-2,1)Q
    \psline[linestyle=dashed,linewidth=0.6pt](Y)(Q)
    \pcline[linestyle=none,offset=4pt](Y)(Y+0.08*{Y>Q})\lput*{:D}{$\bm p$}
    \intersection(A_2)(A_4)(55,0)(55,1)Y
    \intersection(A_2)(A_4)(-2,0)(-2,1)Z
    \psline[linestyle=dashed,linewidth=0.6pt](Y)(Z)
    \intersection(B_4)(B_1)(55,0)(55,1)Y
    \intersection(B_4)(B_1)(-2,0)(-2,1)Z
    \psline[linestyle=dashed,linewidth=0.6pt](Y)(Z)
    \intersection(B_4)(B_5)(55,0)(55,1)Y
    \intersection(B_4)(B_5)(-2,0)(-2,1)Z
    \psline[linestyle=dashed,linewidth=0.6pt](Y)(Z)
    \intersection(5*1,5)(5*137 30 div*0,1)(55,0)(55,1)a
    \intersection(5*1,5)(5*137 30 div*0,1)(-2,0)(-2,1)b
    \psline[linecolor=red](a)(b)
    \pcline[linestyle=none,offset=3pt](a+0.08*{a>b})(a)\lput*{:U}{\color{red}$\bm{s_2}$}
    \vpoint(B_1)(A_1)(A_2)(A_3)(A_4)(A_5)
    \vpoint(A_5)(A_6)(B_6)(B_5)
    \vpoint1pt(B_6)(B_5)(B_4)
    \vpoint(B_5)(B_4)(B_3)(B_2)(B_1)
    \vpoint0pt(A_4)(A_5)(A_6)
    \vpoint2pt(B_2)(B_1)(Q)
    \multido{\n=1+1}{10}{\uput*{2pt}[d](5*\n,0){\n}}
    \multido{\n=1+1}{12}{\uput*{2pt}[l](5*0,\n){\n}}
    \uput[r](-4,-5){\figura}
  \end{pspicture}\hfill%
  \begin{pspicture}[c,framesep=0pt](-10,-8)(47,64)
    \vnode(21,15){P_1}
    \vnode(-7,43){P_2}
    \vnode(14,40){P_3}
    \vnode(21,33){P_4}
    \vnode(42,06){P_5}
    \pspolygon[fillstyle=solid, fillcolor=Bluartico](P_1)(P_2)(P_3)(P_4)(P_5)
    {\psset{linecolor=lightgray}
      \psline(-7,-3)(-7,63)
      \psline(14,-3)(14,63)
      \psline(21,-3)(21,63)
      \psline(42,-3)(42,63)
      \psline(-10,15)(47,15)
      \psline(-10,6)(47,6)
      \psline(-10,33)(47,33)
      \psline(-10,40)(47,40)
      \psline(-10,43)(47,43)}
    \axx(-10,-3)(47,63)
    \vpoint*2pt(P_4)(P_3)(P_2)(P_1)
    \vpoint2pt(P_2)(P_1)(P_5)(P_4)(P_3)
    \point[linecolor=red,fillcolor=red](P_5)(1 5 div*{P_1+P_2+P_3+P_4+P_5})(P_5){\color{red}G}
    \uput*[d](-7,0){$-\frac16$}
    \uput*[d](14,0){$\frac13$}
    \uput*[d](21,0){$\frac12$}
    \uput*[d](42,0){$1$}
    \uput*[l](0,6){$1$}
    \uput*[l](0,15){$\frac52$}
    \uput*[l](0,33){$\frac{11}2$}
    \uput[ur](8,48){$\frac{43}6$}
    \uput[dl](-6,39){$\frac{20}3$}
    \psline[linewidth=0.3pt,linestyle=dotted,dotsep=1pt]{->}(9.3,50.1)(0.3,43.3)
    \psline[linewidth=0.3pt,linestyle=dotted,dotsep=1pt]{->}(-7.3,37.05)(-0.3,39.7)
    \uput{0pt}[r](-10,-7){\figura}
  \end{pspicture}
\end{center}
In figura 7 abbiamo rappresentato i segmenti e in figura 8 abbiamo rappresentato in \L il poligono convesso formato delle rette che
attraversano tutti i segmenti; il punto $G$ è il baricentro discreto del poligono a cui è associata la retta $s_2$ che sembrerebbe
attraversare i segmenti in modo migliore rispetto alla retta $s_1$ della figura 5. Riassumendo abbiamo
che:\renewcommand\minalignsep{2pt}
\begin{align*}
  A_4B_1\!:\;y=&-\frac16x+\frac{43}6&&\Rightarrow&&\!P_2\left(-\frac16,\frac{43}6\right)\,,&
  A_2A_4\!:\;y=&\frac12x+\frac52\!&&\Rightarrow&&\!P_1\left(\frac12,\frac52\right)\,,\\
  B_1B_4\!:\;y=&\frac13x+\frac{20}3\!&&\Rightarrow&&\!P_3\left(\frac13,\frac{20}3\right)\,,&
  B_5A_2\!:\;y=&x+1\!&&\Rightarrow&&P_5\left(1,1\right)\,,\\
  B_4B_5\!:\;y=&\frac12x+\frac{11}2&&\Rightarrow&&\!P_4\left(\frac12,\frac{11}2\right)\,;
\end{align*}
$$x_G=\frac{-\frac16+\frac13+\frac12+\frac12+1}{5}=\frac{13}{30}\;,\quad
y_G=\frac{\frac{43}6+\frac{20}3+\frac{11}2+\frac52+1}{5}=\frac{137}{30}$$
ed infine $s_2\!:\,y=\dfrac{13}{30}x+\dfrac{137}{30}$. \newpage\noindent
Rivediamo anche l'esempio della figura 6.
\begin{center}\psset{unit=1mm}
  \small%
  \vvnn1(5,5,35)%
  \vvnn2(15,20,50)%
  \vvnn3(20,30,40)%
  \vvnn4(35,20,45)%
  \vvnn5(45,15,50)%
  \vvnn6(50,10,60)%
  \begin{pspicture}[c,framesep=0pt](-4,-6)(56,66)
    \pspolygon[linestyle=none,fillstyle=solid,fillcolor=Bluartico](A_1)(A_2)(A_3)(A_4)(A_5)(A_6)(B_6)(B_5)(B_4)(B_3)(B_2)(B_1)
    \multido{\n=5+5}{12}{\psline[linecolor=lightgray](-2,\n)(55,\n)}
    \multido{\n=5+5}{10}{\psline[linecolor=lightgray](\n,-2)(\n,65)}
    \axx(-2,-2)(55,65)
    \multido{\n=1+1}6{\psline[linewidth=1pt,linecolor=blue](A_\n)(B_\n)}
    \intersection(A_3)(B_5)(55,0)(55,1)X
    \intersection(A_3)(B_5)(-2,0)(-2,1)W
    \psline[linestyle=dashed,linewidth=0.6pt](X)(W)
    \pcline[linestyle=none,offset=3pt](X+0.08*{X>W})(X)\lput*{:U}{$\bm r$}
    \intersection(A_3)(B_1)(55,0)(55,1)Y
    \intersection(A_3)(B_1)(-2,0)(-2,1)Q
    \psline[linestyle=dashed,linewidth=0.6pt](Y)(Q)
    \pcline[linestyle=none,offset=4pt](Y)(Y+0.08*{Y>Q})\lput*{:D}{$\bm p$}
    \intersection(B_4)(B_1)(55,0)(55,1)Y
    \intersection(B_4)(B_1)(-2,0)(-2,1)Z
    \psline[linestyle=dashed,linewidth=0.6pt](Y)(Z)
    \intersection(B_4)(B_5)(55,0)(55,1)Y
    \intersection(B_4)(B_5)(-2,0)(-2,1)Z
    \psline[linestyle=dashed,linewidth=0.6pt](Y)(Z)
    \intersection(5*-11,2)(5*223 40 div*0,1)(55,0)(55,1)a
    \intersection(5*-11,2)(5*223 40 div*0,1)(-2,0)(-2,1)b
    \psline[linecolor=red](a)(b)
    \pcline[linestyle=none,offset=3pt](a+0.08*{a>b})(a)\lput*{:U}{\color{red}$\bm{s_2}$}
    \vpoint(B_1)(A_1)(A_2)(A_3)(A_4)(A_5)
    \vpoint(A_5)(A_6)(B_6)(B_5)
    \vpoint1pt(B_6)(B_5)(B_4)
    \vpoint(B_5)(B_4)(B_3)(B_2)(B_1)
    \vpoint0pt(A_4)(A_5)(A_6)
    \vpoint2pt(B_2)(B_1)(Q)
    \multido{\n=1+1}{10}{\uput*{2pt}[d](5*\n,0){\n}}
    \multido{\n=1+1}{12}{\uput*{2pt}[l](5*0,\n){\n}}
    \uput[r](-4,-5){\figura}
  \end{pspicture}\hfill%
  \begin{pspicture}[c,framesep=0pt](-19,-8)(38,64)
    \vnode(!4 42 mul 5 div 14 6 mul 5 div){P_1}
    \vnode(-14,44){P_2}
    \vnode(14,40){P_3}
    \vnode(21,33){P_4}
    \pspolygon[fillstyle=solid, fillcolor=Bluartico,linearc=0.05](P_1)(P_2)(P_3)(P_4)
    {\psset{linecolor=lightgray}
      \psline(-14,-3)(-14,63)
      \psline(14,-3)(14,63)
      \psline(21,-3)(21,63)
      \psline(P_1|0,-3)(P_1|0,63)
      \psline(-19,0|P_1)(38,0|P_1)
      \psline(-19,33)(38,33)
      \psline(-19,40)(38,40)
      \psline(-19,44)(38,44)}
    \axx(-19,-3)(38,63)
    \vpoint*2pt(P_2)(P_1)(P_4)(P_3)(P_2)(P_1)
    \point[linecolor=red,fillcolor=red](P_1)(1 4 div*{P_1+P_2+P_3+P_4})(P_1){\color{red}G}
    \uput*[d](-14,0){$-\frac13$}
    \uput*[d](14,0){$\frac13$}
    \uput*[d](21,0){$\frac12$}
    \uput*[d](P_1|0,0){$\frac45$}
    \uput*[l](0,0|P_1){$\frac{14}5$}
    \uput*[l](0,33){$\frac{11}2$}
    \uput[ur](8,48){$\frac{22}3$}
    \uput[dl](-6,39){$\frac{20}3$}
    \psline[linewidth=0.3pt,linestyle=dotted,dotsep=1pt]{->}(9.3,50.1)(0.3,44.3)
    \psline[linewidth=0.3pt,linestyle=dotted,dotsep=1pt]{->}(-7.3,37.05)(-0.3,39.7)
    \uput{0pt}[r](-19,-7){\figura}
  \end{pspicture}
\end{center}
Anche in questo caso la retta $s_2$ attraversa in maniera decisamente ``migliore'' tutti i segmenti, rispetto alla retta $s_1$ della figura
6. Riassumendo in questo caso abbiamo che:
\begin{align*} 
  B_5A_3\!:\;y=&\frac45x+\frac{14}5\!&&\Rightarrow&&P_1\left(\frac45,\frac{14}5\right)\,,&
  A_3B_1\!:\;y=&-\frac13x+\frac{22}3&&\Rightarrow&&\!P_2\left(-\frac13,\frac{22}3\right)\,,\\[3mm]
  B_1B_4\!:\;y=&\frac13x+\frac{20}3\!&&\Rightarrow&&\!P_3\left(\frac13,\frac{20}3\right)\,,&
  B_4B_5\!:\;y=&\frac12x+\frac{11}2&&\Rightarrow&&\!P_4\left(\frac12,\frac{11}2\right)\,;
\end{align*}
$$x_G=\frac{\frac45-\frac13+\frac13+\frac12}4=\frac{13}{40}\;,\quad
y_G=\frac{\frac{14}5+\frac{22}3+\frac{20}3+\frac{11}2}4=\frac{223}{40}$$
ed infine $s_2\!:\,y=\dfrac{13}{40}x+\dfrac{223}{40}$.\\[2mm]
{\bf Algoritmo 3.} Una retta ancora più speciale.
\begin{enumerate}
  \item Eseguiamo i punti 1, 2 e 3 dell'algoritmo 2.
  \item Fatte le stesse osservazioni del punto 4 dell'algoritmo 2, come retta $s_3$ prendiamo il baricentro continuo del poligono che
    troviamo in \L.
\end{enumerate}
Rivediamo nuovamente i due esempi precedenti.
\begin{center}\psset{unit=1mm}
  \small%
  \vvnn1(5,5,35)%
  \vvnn2(15,20,50)%
  \vvnn3(20,15,40)%
  \vvnn4(35,30,45)%
  \vvnn5(45,15,50)%
  \vvnn6(50,10,60)%
  \begin{pspicture}[c,framesep=0pt](-4,-6)(56,66)
    \pspolygon[linestyle=none,fillstyle=solid,fillcolor=Bluartico](A_1)(A_2)(A_3)(A_4)(A_5)(A_6)(B_6)(B_5)(B_4)(B_3)(B_2)(B_1)
    \multido{\n=5+5}{12}{\psline[linecolor=lightgray](-2,\n)(55,\n)}
    \multido{\n=5+5}{10}{\psline[linecolor=lightgray](\n,-2)(\n,65)}
    \axx(-2,-2)(55,65)
    \multido{\n=1+1}6{\psline[linewidth=1pt,linecolor=blue](A_\n)(B_\n)}
    \intersection(5*1,5)(5*137 30 div*0,1)(55,0)(55,1)a
    \intersection(5*1,5)(5*137 30 div*0,1)(-2,0)(-2,1)b
    \psline[linestyle=dashed,linecolor=green](a)(b)
    \pcline[linestyle=none,offset=4pt](a+0.08*{a>b})(a)\lput*{:U}{\color{green}$\bm{s_2}$}
    \intersection(A_2)(B_5)(55,0)(55,1)X
    \intersection(A_2)(B_5)(-2,0)(-2,1)W
    \intersection(A_4)(B_1)(55,0)(55,1)Y
    \intersection(A_4)(B_1)(-2,0)(-2,1)Z
    \psline[linestyle=dashed,linecolor=cyan](0.5*{X+Y})(0.5*{W+Z})
    \pcline[linestyle=none,offset=4pt](0.5*{X+Y})(0.5*{X+Y}+0.08*{{0.5*{X+Y}}>{0.5*{W+Z}}})\lput*{:D}{\color{cyan}$\bm{s_1}$}
    \intersection(!0 107 24 div 5 mul)(!5 117 24 div 5 mul)(55,0)(55,1)X
    \intersection(!0 107 24 div 5 mul)(!5 117 24 div 5 mul)(-2,0)(-2,1)W
    \psline[linecolor=red](X)(W)
    \pcline[linestyle=none,offset=1pt](X)(X+0.08*{X>W})\lput*{:D}{\color{red}$\bm{s_3}$}
    \vpoint(B_1)(A_1)(A_2)(A_3)(A_4)(A_5)
    \vpoint(A_5)(A_6)(B_6)(B_5)
    \vpoint1pt(B_6)(B_5)(B_4)
    \vpoint(B_5)(B_4)(B_3)(B_2)(B_1)(A_1)
    \vpoint0pt(A_4)(A_5)(A_6)
    \multido{\n=1+1}{10}{\uput*{2pt}[d](5*\n,0){\n}}
    \multido{\n=1+1}{12}{\uput*{2pt}[l](5*0,\n){\n}}
    \uput[r](-4,-5){\figura}
  \end{pspicture}\hfill%
  \vvnn1(5,5,35)%
  \vvnn2(15,20,50)%
  \vvnn3(20,30,40)%
  \vvnn4(35,20,45)%
  \vvnn5(45,15,50)%
  \vvnn6(50,10,60)%
  \begin{pspicture}[c,framesep=0pt](-4,-6)(56,66)
    \pspolygon[linestyle=none,fillstyle=solid,fillcolor=Bluartico](A_1)(A_2)(A_3)(A_4)(A_5)(A_6)(B_6)(B_5)(B_4)(B_3)(B_2)(B_1)
    \multido{\n=5+5}{12}{\psline[linecolor=lightgray](-2,\n)(55,\n)}
    \multido{\n=5+5}{10}{\psline[linecolor=lightgray](\n,-2)(\n,65)}
    \axx(-2,-2)(55,65)
    \multido{\n=1+1}6{\psline[linewidth=1pt,linecolor=blue](A_\n)(B_\n)}
    \intersection(5*-11,2)(5*223 40 div*0,1)(55,0)(55,1)a
    \intersection(5*-11,2)(5*223 40 div*0,1)(-2,0)(-2,1)b
    \psline[linestyle=dashed,linecolor=green](a)(b)
    \pcline[linestyle=none,offset=4pt](a+0.08*{a>b})(a)\lput*{:U}{\color{green}$\bm{s_2}$}
    \intersection(A_3)(B_5)(55,0)(55,1)X
    \intersection(A_3)(B_5)(-2,0)(-2,1)W
    \intersection(A_3)(B_1)(55,0)(55,1)Y
    \intersection(A_3)(B_1)(-2,0)(-2,1)Z
    \psline[linestyle=dashed,linecolor=cyan](0.5*{X+Y})(0.5*{W+Z})
    \pcline[linestyle=none,offset=4pt](0.5*{X+Y})(0.5*{X+Y}+0.08*{{0.5*{X+Y}}>{0.5*{W+Z}}})\lput*{:D}{\color{cyan}$\bm{s_1}$}
    \intersection(!0 11873 2130 div 5 mul)(!5 752111 127800 div 5 mul)(55,0)(55,1)X
    \intersection(!0 11873 2130 div 5 mul)(!5 752111 127800 div 5 mul)(-2,0)(-2,1)W
    \psline[linecolor=red](X)(W)
    \pcline[linestyle=none,offset=1pt](X)(X+0.08*{X>W})\lput*{:D}{\color{red}$\bm{s_3}$}
    \vpoint(A_5)(A_6)(B_6)(B_5)
    \vpoint1pt(B_6)(B_5)(B_4)
    \vpoint(B_5)(B_4)(B_3)(B_2)(B_1)(A_1)(A_2)(A_3)(A_4)(A_5)
    \vpoint0pt(A_4)(A_5)(A_6)
    \multido{\n=1+1}{10}{\uput*{2pt}[d](5*\n,0){\n}}
    \multido{\n=1+1}{12}{\uput*{2pt}[l](5*0,\n){\n}}
    \uput[r](-4,-5){\figura}
  \end{pspicture}
\end{center}
Osserviamo il poligono in figura 8:
\begin{itemize}
 \item per L1 le equazioni delle rette $P_1P_2$, $P_2P_3$, $P_3P_4$, $P_4P_5$ e $P_5P_1$ sono: 
  \begin{gather*}
    P_1P_2\!:\,y=-7x+6\;,\quad P_2P_3\!:\,y=-x+7\;,\quad P_3P_4\!:\,y=-7x+9\;,\\
    P_4P_5\!:\,y=-9x+10\;,\quad P_5P_1\!:\,y=-3x+4\;;
  \end{gather*}
 \item la superficie del poligono misura:
  \begin{align*}
    A&=\int_SdS=\int_{-\frac16}^{\frac13}\int^{-x+7}_{-7x+6}dydx+\int_{\frac13}^{\frac12}\!\!\int^{-7x+9}_{-7x+6}dydx+
    \int_{\frac12}^{1}\!\!\int^{-9x+10}_{-3x+4}dydx=\\
    &=\int_{-\frac16}^{\frac13}[y]^{-x+7}_{-7x+6}dx+\int_{\frac13}^{\frac12}[y]^{-7x+9}_{-7x+6}dx+
    \int_{\frac12}^{1}[y]^{-9x+10}_{-3x+4}dx=\\
    &=\int_{-\frac16}^{\frac13}(6x+1)dx+\int_{\frac13}^{\frac12}3dx+\int_{\frac12}^{1}(6-6x)dx=\\
    &=\big[3x^2+x\big]_{-\frac16}^{\frac13}+\big[3x\big]_{\frac13}^{\frac12}+
      \big[6x-3x^2\big]_{\frac12}^{1}=\frac34+\frac12+\frac34=\bm2\,;
  \end{align*}
 \item il baricentro continuo $M(x_M,y_M)$ lo troveremo calcolando:
  \begin{align*}
    x_M&=\frac1A\int_SxdS=\\
    &=\frac12\left(\int_{-\frac16}^{\frac13}\int^{-x+7}_{-7x+6}\!\!\!xdydx+\int_{\frac13}^{\frac12}\!\!\!\int^{-7x+9}_{-7x+6}\!\!xdydx+
    \int_{\frac12}^{1}\!\!\int^{-9x+10}_{-3x+4}\!\!\!xdydx\right)=\\
    &=\frac12\left(\int_{-\frac16}^{\frac13}x[y]^{-x+7}_{-7x+6}dx+\int_{\frac13}^{\frac12}x[y]^{-7x+9}_{-7x+6}dx+
    \int_{\frac12}^{1}x[y]^{-9x+10}_{-3x+4}dx\right)=\\
    &=\int_{-\frac16}^{\frac13}\left(3x^2+\frac12x\right)dx+\int_{\frac13}^{\frac12}\frac32xdx+\int_{\frac12}^{1}(3x-3x^2)dx=\\
    &=\left[x^3+\frac14x^2\right]_{-\frac16}^{\frac13}\!\!+\left[\frac34x^2\right]_{\frac13}^{\frac12}\!\!+
      \left[\frac32x^2-x^3\right]_{\frac12}^{1}=\frac1{16}+\frac5{48}+\frac14=\bm{\frac5{12}}\,,
  \end{align*}
  \begin{align*}
    y_M&=\frac1A\int_SydS=\\
    &=\frac12\left(\int_{-\frac16}^{\frac13}\int^{-x+7}_{-7x+6}\!\!\!ydydx+\int_{\frac13}^{\frac12}\!\!\!\int^{-7x+9}_{-7x+6}\!\!ydydx+
    \int_{\frac12}^{1}\!\!\int^{-9x+10}_{-3x+4}\!\!\!ydydx\right)=\\
    &=\frac14\left(\int_{-\frac16}^{\frac13}\left[y^2\right]^{-x+7}_{-7x+6}dx+\int_{\frac13}^{\frac12}\left[y^2\right]^{-7x+9}_{-7x+6}dx+
    \int_{\frac12}^{1}\left[y^2\right]^{-9x+10}_{-3x+4}dx\right)=\\
    &=\!\!\!\int_{\!-\frac16}^{\frac13}\!\!\!\!\left(\!\frac{13}4\!+\!\frac{35}2x-12x^2\!\!\right)\!dx\!+\!\!\!
    \int_{\frac13}^{\frac12}\!\!\!\left(\!\frac{45}4\!-\!\frac{21}2x\!\!\right)\!dx\!+\!\!\!
    \int_{\frac12}^{1}\!\!\!\!\left(21-39x+18x^2\right)\!dx\!=\\
    &=\left[\frac{13}4x+\frac{35}4x^2-4x^3\right]_{-\frac16}^{\frac13}\!\!\!+
    \left[\frac{45}4x-\frac{21}4x^2\right]_{\frac13}^{\frac12}\!\!+
    \left[21x-\frac{39}2x^2+6x^3\right]_{\frac12}^{1}\!=\\
    &=\frac{35}{16}+\frac{55}{48}+\frac98=\bm{\frac{107}{24}}\;;
  \end{align*}
 \item al baricentro continuo corrisponde la retta $s_3\!:\,y=\dfrac{5}{12}x+\dfrac{107}{24}$ (figura 11).
\end{itemize}
Osserviamo ora il poligono in figura 10:
\begin{itemize}
  \item per L1 le equazioni delle rette $P_1P_2$, $P_2P_3$, $P_3P_4$ e $P_4P_5$ sono: 
  \begin{align*}
    P_1P_2\!:\,y&=-4x+6\;,&P_2P_3\!:\,y&=-x+7\;,\;,\\
    P_3P_4\!:\,y&=-7x+9\;,&P_4P_1\!:\,y&=-9x+10\;;
  \end{align*}
 \item la superficie del poligono misura:
  \begin{align*}
    A&=\int_SdS=\int_{-\frac13}^{\frac13}\int^{-x+7}_{-4x+6}\!dydx+\int_{\frac13}^{\frac12}\!\!\int^{-7x+9}_{-4x+6}\!dydx+
    \int_{\frac12}^{\frac45}\!\!\int^{-9x+10}_{-4x+6}\!dydx=\\
    &=\ldots=\frac23+\frac7{24}+\frac9{40}=\bm{\frac{71}{60}}\,;
  \end{align*}
 \item il baricentro continuo $M(x_M,y_M)$ lo troveremo calcolando:
  \begin{align*}
    x_M&=\frac1A\int_SxdS=\\
    &=\frac{60}{71}\left(\int_{-\frac13}^{\frac13}\!\int^{-x+7}_{-4x+6}\!\!xdydx+
    \int_{\frac13}^{\frac12}\!\!\int^{-7x+9}_{-4x+6}\!\!\!xdydx+\int_{\frac12}^{\frac45}\!\!\int^{-9x+10}_{-4x+6}\!\!xdydx\!\right)=\\
    &=\ldots=\frac{60}{71}\left(\frac{2}{27}+\frac{13}{108}+\frac{27}{200}\right)=\bm{\frac{39731}{127800}}\,,
  \end{align*}
  \begin{align*}
    y_M&=\frac1A\int_SydS=\\
    &=\frac{60}{71}\left(\int_{-\frac13}^{\frac13}\!\int^{-x+7}_{-4x+6}\!\!ydydx+
    \int_{\frac13}^{\frac12}\!\!\int^{-7x+9}_{-4x+6}\!\!\!ydydx+\int_{\frac12}^{\frac45}\!\!\int^{-9x+10}_{-4x+6}\!\!ydydx\!\right)=\\
    &=\ldots=\frac{60}{71}\left(\frac{112}{27}+\frac{659}{432}+\frac{369}{400}\right)=\bm{\frac{11873}{2130}}
  \end{align*}
 \item al baricentro continuo corrisponde la retta $s_3\!:\,y=\dfrac{39731}{127800}x+\dfrac{11873}{2130}$ (figura 12).
\end{itemize}
Nei due esempi sembrerebbe di capire che la retta che troviamo con l'algoritmo 1 sia poco valida, mentre il baricentro discreto e
quello continuo ci danno delle rette non molto dissimili tra loro. Se questo fosse sempre vero, allora si potrebbe considerare più
opportuno utilizzare l'algoritmo 2, se non altro per evitare la enorme mole di calcoli che si devono fare per trovare la retta con
l'algoritmo 3. Ma questo risultato è casuale, dovuto a quei pochi esempi visti, ed il prossimo esempio ne è la conferma. Propongo tale
esempio in conclusione dell'articolo e senza calcoli.
\begin{center}\psset{unit=1mm}
  \small%
  \wnn1(1,1,11)%
  \wnn2(2,2,12)%
  \wnn3(4,3,12)%
  \wnn4(6,2,11)%
  \wnn5(8,2,12)%
  \wnn6(9,3,13)%
  \begin{pspicture}[c,framesep=0pt](-4,-6)(56,71)\small
    {\psset{linestyle=none,fillstyle=solid,fillcolor=Bluartico}
      \pspolygon(A_1)(A_2)(A_3)(A_4)(A_5)(A_6)(B_6)(B_5)(B_4)(B_3)(B_2)(B_1)}
    \multido{\n=5+5}{13}{\psline[linecolor=lightgray](-2,\n)(55,\n)}
    \multido{\n=5+5}{10}{\psline[linecolor=lightgray](\n,-2)(\n,70)}
    \axx(-2,-2)(55,70)
    \multido{\n=1+1}6{\psline[linewidth=1pt,linecolor=blue](A_\n)(B_\n)}
    \retta[linecolor=cyan](!1 4 div 23 4 div)
    \pcline[linestyle=none,offset=4pt](@a@+0.08*{@a@>@b@})(@a@)\lput*{:U}{\color{cyan}$\bm{s_1}$}
    \retta[linecolor=green](!7 16 div 77 16 div)
    \pcline[linestyle=none,offset=4pt](@a@+0.08*{@a@>@b@})(@a@)\lput*{:U}{\color{green}$\bm{s_2}$}
    \retta[linecolor=red](!11 46 div 267 46 div)
    \pcline[linestyle=none,offset=4pt](@a@)(@a@+0.08*{@a@>@b@})\lput*{:D}{\color{red}$\bm{s_3}$}
    \vpoint*(A_1)(A_2)(A_3)(A_4)(A_5)(A_6)(B_6)(B_5)(B_4)(B_3)(B_2)(B_1)(A_1)(A_2)
    \multido{\n=1+1}{10}{\uput*{2pt}[d](5*\n,0){\n}}
    \multido{\n=1+1}{13}{\uput*{2pt}[l](5*0,\n){\n}}
    \uput[r](-4,-5){\figura}
  \end{pspicture}\hfill%
  \begin{pspicture}[c,framesep=0pt](-23,-10)(35,67)
    \vnode(21,0){P_1}
    \vnode(10.5,5){P_2}
    \vnode(0,15){P_3}
    \vnode(-21,60){P_4}
    \vnode(0,55){P_5}
    \vnode(10.5,40){P_6}
    \vnode(21,20){P_7}
    \vnode(31.5,-2.5){P_8}
    \pspolygon[fillstyle=solid, fillcolor=Bluartico,linearc=0.05](P_1)(P_2)(P_3)(P_4)(P_5)(P_6)(P_7)(P_8)
    {\psset{linecolor=lightgray}
      \psline(P_1|0,-6)(P_1|0,66)
      \psline(P_2|0,-6)(P_2|0,66)
      \psline(P_4|0,-6)(P_4|0,66)
      \psline(P_8|0,-6)(P_8|0,66)
      \psline(-23,0|P_2)(35,0|P_2)
      \psline(-23,0|P_8)(35,0|P_8)
      \psline(-23,0|P_3)(35,0|P_3)
      \psline(-23,0|P_4)(35,0|P_4)
      \psline(-23,0|P_5)(35,0|P_5)
      \psline(-23,0|P_6)(35,0|P_6)
      \psline(-23,0|P_7)(35,0|P_7)}
    \axx(-23,-6)(35,66)
    \vpoint*2pt(P_2)(P_1)(P_8)(P_7)(P_6)(P_5)(P_4)(P_3)(P_2)(P_1)
    \uput*[u](P_8|0,0){$\frac32$}
    \uput*[d](P_6|0,0){$\frac12$}
    \uput*[d](P_4|0,0){$-1$}
    \uput*[l](0,0|P_4){$12$}
    {\psset{fillcolor=Bluartico}
      \uput*[u](P_7|0,0){$1$}
      \uput*[15](0,0|P_3){$3$}
      \uput*[r](0,0|P_7){$4$}
      \uput*[dl](0,0|P_5){$11$}
      \uput*[l](0,0|P_6){$8$}}
    \uput*[l](0,0|P_2){$1$}
    \uput*[l](0,0|P_8){$-\frac12$}
    \uput*[45](0,0){$O$}
    \uput{0pt}[r](-23,-9){\figura}
  \end{pspicture}
\end{center}
\begin{align*}
  A_1A_2\!:\,&y=x&&P_1\left(1,0\right)		&B_1B_4\!:\,&y=11&&P_5\left(0,11\right)\\
  A_2A_3\!:\,&y=\f12x+1&&P_2\left(\f12,1\right)&B_4B_5\!:\,&y=\f12x+8&&P_6\left(\f12,8\right)\\
  A_3A_6\!:\,&y=3&&P_3\left(0,3\right)		&B_5B_6\!:\,&y=x+4&&P_7\left(1,4\right)\\
  A_6B_1\!:\,&y=-x+12&&P_4\left(-1,12\right)	&B_6A_1\!:\,&y=\f32x-\f12&&P_8\left(\f32,-\f12\right)
\end{align*}
 $$s_1\!:\,y=\f14x+\f{23}4\;,\quad s_2\!:\,y=\f7{16}x+\f{77}{16}\;,\quad s_3\!:\,y=\f{11}{46}x+\frac{267}{46}\;.$$
Come vediamo in questo caso la retta $s_3$ è molto vicina alla retta $s_1$, mentre la retta $s_2$ sembrerebbe un po' troppo inclinata, ma
credo che a questo punto sia ora che il mio amico statistico si riprenda in mano il problema e,
in virtù di questi risultati, lo analizzi con i suoi strumenti.
\end{document}
